\documentclass[11pt,letterpaper]{amsart}
\usepackage{amsthm,amsmath,amsfonts,latexsym,amssymb,mathrsfs,color,changepage,graphicx}
\usepackage{amsfonts}
\usepackage{amssymb}
\usepackage{graphicx}
\usepackage{pstricks}
\usepackage{amsmath}
\usepackage{amsxtra}
\usepackage{bm}
\usepackage{enumitem}
\usepackage{marginnote}
\usepackage[normalem]{ulem}
\usepackage{multicol}

\makeatletter
\newcommand* \bigcdot{\mathpalette \bigcdot@{.5}}
\newcommand* \bigcdot@[2]{\mathbin{\vcenter{\hbox{\scalebox{#2}{$\m@th#1\bullet$}}}}}
\makeatother

\makeatletter
\newcommand\appendix@section[1]{%
 \refstepcounter{section}%
 \orig@section*{Appendix \@Alph\c@section: #1}%
 \addcontentsline{toc}{section}{Appendix \@Alph\c@section: #1}%
}
\let\orig@section\section
\g@addto@macro\appendix{\let\section\appendix@section} \makeatother

\theoremstyle{definition}
\newtheorem{Def}{Definition}[section]
\newtheorem{df}[Def]{Definition}
\newtheorem{thm}[Def]{Theorem}
\newtheorem{cor}[Def]{Corollary}

\newtheorem{lem}[Def]{Lemma}

\newtheorem{rem}[Def]{Remark}
\newtheorem{ex}[Def]{Example}

\renewcommand{\thefootnote}{\fnsymbol{footnote}}

\begin{document}

\title
[Circle companions of Hardy spaces of the unit disk]
{Circle companions of \\ Hardy spaces of the unit disk
}

\author{Ra{\'u}l\ E.\ Curto, In Sung Hwang, Sumin Kim and Woo Young Lee}

\date{}

\maketitle

%
%
%
%

\noindent{\bf Abstract.} This paper gives a complete answer to the
following problem: Find the circle companion of the Hardy space of the
unit disk with values in the space of all bounded linear operators
between two separable Hilbert spaces. \ Classically, the problem
asks whether for each function $h$ on the unit {\it disk}, there
exists a ``boundary function" $bh$ on the unit {\it circle} such
that the mapping $bh\mapsto h$ is an isometric isomorphism between
Hardy spaces of the unit circle and the unit disk with values in
some Banach space. \ For the case of bounded linear operator-valued
functions, we construct a Hardy space of the unit circle such that
its elements are SOT measurable, and their norms are integrable:
indeed, this new space is isometrically isomorphic to the Hardy
space of the unit disk via a ``strong Poisson integral."

\medskip


\setcounter{page}{1}


\renewcommand{\thefootnote}{}
\footnote{
\\
\vskip -.5cm \ \noindent \hskip -.4cm \textit{Mathematics Subject
Classification
(2020).} Primary 42B30, 30H10, 46E40, 46E30\\
\noindent
 \textit{Keywords.} Operator-valued Hardy
spaces, the analytic Radon-Nikod\'ym property, SOT measurable
functions, strong Poisson integrals, strong boundary functions,
circle companions. }

%
%
%
%

\section{Introduction}

We solve an old and outstanding problem in the theory
of Hardy spaces. \ For $1 \le p \le \infty$ and $X$ a Banach space,
consider the Hardy space $H^p(\mathbb D, X)$ of $X$--valued
functions defined on the unit disk $\mathbb D$. \ For each $h \in
H^p(\mathbb D, X)$, we try to associate a function $bh$, which
captures the boundary values of $h$. \ Our goal is to
identify a Banach space $\mathcal{C}$ of $X$--valued functions defined on the unit
circle $\mathbb{T}$ which represent, in a natural and canonical way,
the boundary values of functions in $H^p(\mathbb D, X)$.  \ When the
mapping $h\mapsto bh$ is an isometric isomorphism from $H^p(\mathbb
D, X)$ onto $\mathcal{C}$, we say that $\mathcal{C}$ is the
``circle companion" of $H^p(\mathbb D, X)$.

In this paper, we find the circle companion of the Hardy space of the
unit disk with values in $\mathcal{B}(D,E)$, the space of all
bounded linear operators between two separable Hilbert spaces $D$
and $E$. \ That is, we focus on the cases where the above-mentioned
Banach space $X$ is $\mathcal{B}(D,E)$.

A study on the boundary values of functions in Banach-space-valued Hardy spaces
$H^p(\mathbb D, X)$ of the unit disk was initiated in 1976 by A.V.
Bukhvalov \cite{Bu}. \ Since then, many researchers
have studied the spaces of boundary values of functions in $H^p(\mathbb D, X)$
(see the bibliographical references at the end of this paper). \ In
particular, in 1982 A.V. Bukhvalov and A.A. Danilevich
\cite{BD} showed that if a Banach space $X$ has the analytic
Radon-Nikod\'ym property (ARNP) (or equivalently, every function in
$H^1(\mathbb D,X)$ has radial limits a.e. on $\mathbb T$; cf.
\cite{BL}, \cite{BD}, \cite{DE}, \cite{Do2}, \cite{ED}), then the
space of boundary values of functions in $H^p(\mathbb D, X)$ is
$H^p(\mathbb T, X)$; more precisely, the mapping $h\mapsto bh$ is an
isometric isomorphism from $H^p(\mathbb D, X)$ onto $H^p(\mathbb T,
X)$ and moreover, $P[bh]=h$, where $P[\cdot]$ denotes the Poisson
integral, or equivalently, the mapping $f\mapsto P[f]$ is an
isometric isomorphism from $H^p(\mathbb T, X)$ {\it onto}
$H^p(\mathbb D, X)$. \ However, this is no longer true for spaces of
operator-valued functions. \ Indeed, if $X=\mathcal B(D,E)$, then
$X$ need not satisfy the ARNP in general, so that we cannot
guarantee that the mapping $f\mapsto P[f]$ is an isometric
isomorphism from $H^p(\mathbb T, X)$ onto $H^p(\mathbb D, X)$. \ In
fact, for each $1\leq p\leq\infty$, there exists a function $h\in
H^p(\mathbb D, \mathcal B(\ell^2))$ such that $h \neq P[f]$ for any
$f \in H^p(\mathbb T, \mathcal B(\ell^2))$ (see Example \ref{ex23}).
\ Thus, the following problem remained unsolved until now:
\begin{equation}\label{mainq}
\hbox{Find the circle companion of $H^p(\mathbb D, \mathcal B(D,E))$
for $1\le p\leq \infty$.}
\end{equation}
Although not necessarily explicitly stated as an open problem, the
problem (\ref{mainq}) appears in Nikolski's book \cite[p. 62, lines
14-15]{Ni2}, where it is mentioned implicitly. \ In this paper, we
solve problem (\ref{mainq}). \ Our solution aims to shed
additional insights into the study of boundary values, and how the
Poisson transform serves as a bridge between those boundary values
and the initial Hardy space function. \ Towards our solution, we
introduce a new space $L^p_{sot}(\mathbb T, \mathcal B(X, Y))$
($1\le p\le \infty$) defined by the space of all (equivalence
classes of)  SOT measurable functions $f:\mathbb T\to \mathcal B(X,
Y)$ such that $N(f)\in L^p(\mathbb T)$ (where
$N(f)(z):=||f(z)||_{\mathcal B(X, Y)}$); we identify $f$ and $g$
when $f(z)=g(z)$ for almost all $z\in\mathbb T$. \ In this case, let
$$
||f||_{L^p_{sot}(\mathbb T,\,  \mathcal
B(X,Y))}:=||N(f)||_{L^p(\mathbb T)}.
$$
Also for $1\le p\le\infty$, let $H^p_{sot}(\mathbb T, \mathcal B(X,
Y))$ be defined by the space of functions in $L^p_{sot}(\mathbb T,
\mathcal B(X, Y))$ such that $f(\cdot)x \in H^p(\mathbb T, Y)$ for
every $x \in X$. \ On the other hand, we define the ``strong Poisson
integral" $P_s[f]$ of $f$ in $H^p_{sot}(\mathbb T, \mathcal B(X,Y))$
by
$$
P_s[f](\zeta)x :=P[f(\cdot)x](\zeta)\quad(x\in X,\
\zeta\in\mathbb D).
$$
The aim of this paper is to prove that for $1\le p \le\infty$, the
mapping $f\mapsto P_s[f]$ is an isometric isomorphism from
$H^p_{sot}(\mathbb T, \mathcal B(D,E))$ onto $H^p(\mathbb D,
\mathcal B(D,E))$. \ In \cite[p. 53, Theorem 3.11.10]{Ni2} it is shown
that the mapping $f\mapsto P_s[f]$ provides an isometric isomorphism
from $H^\infty_{WOT}(\mathbb T, \mathcal B(D,E))$ onto
$H^\infty(\mathbb D, \mathcal B(D,E))$ when $D$ and $E$ are
separable Hilbert spaces - in fact, we can show that
$H^\infty_{WOT}(\mathbb T, \mathcal B(D,E))= H^\infty_{sot}(\mathbb
T, \mathcal B(D,E))$ in our language. \ This provides a sound
rationale for denoting this new space as $H^p_{sot}(\mathbb T,
\mathcal B(D,E))$, in a manner fully consistent with the well-known
result. \ In fact, we can get a more general version of the Banach
space setting. \ The following is the main result of this paper.

\smallskip

\begin{thm}\label{mainthm}
Let $X$ be a separable Banach space and $Y$ be a Banach space
satisfying the analytic Radon-Nikod\'ym property. \ Then, for $1\leq
p\leq \infty$, the mapping $f \mapsto P_s[f]$ is an isometric
isomorphism  from $H^p_{sot}(\mathbb T, \mathcal B(X, Y))$ onto
$H^p(\mathbb D, \mathcal B(X, Y))$.
\end{thm}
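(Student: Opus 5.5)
The plan has three parts: show that $P_s[f]$ lies in $H^p(\mathbb D,\mathcal B(X,Y))$ with norm at most $\|f\|$, show that the map is onto, and show that it is isometric. Injectivity will come for free from the isometry.

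\emph{Well-definedness and the easy inequality.} Let $f\in H^p_{sot}(\mathbb T,\mathcal B(X,Y))$ and $\zeta\in\mathbb D$. For each $x\in X$, the function $f(\cdot)x$ lies in $H^p(\mathbb T,Y)$, so $P[f(\cdot)x](\zeta)$ is defined. It depends linearly on $x$, and
\[
\|P[f(\cdot)x](\zeta)\|\le \int_{\mathbb T} P_\zeta(t)\,N(f)(t)\,\|x\|\,dm(t).
\]
Hence $P_s[f](\zeta)\in\mathcal B(X,Y)$ and $\|P_s[f](\zeta)\|\le P[N(f)](\zeta)$. Next I need $P_s[f]$ to be analytic as a $\mathcal B(X,Y)$-valued map. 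Expand $f(\cdot)x=\sum_{n\ge0}\hat f_n x\, z^n$, where $\hat f_n x:=\int f(t)x\,\bar t^n\,dm(t)$ is a Bochner integral; it is well defined because $f(\cdot)x$ is strongly measurable and $\|\hat f_n\|\le\|N(f)\|_1$. Then $P_s[f](\zeta)=\sum_n \hat f_n\zeta^n$, and this series converges in operator norm for $|\zeta|<1$, so $P_s[f]$ is norm-analytic. For the norm estimate, take $p<\infty$ first. Since $\|P_s[f](r\cdot)\|\le P[N(f)](r\cdot)$ pointwise, Jensen/Minkowski for the Poisson kernel gives $M_p(P_s[f],r)\le \|N(f)\|_p$. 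For $p=\infty$ the pointwise bound alone suffices. Altogether $\|P_s[f]\|_{H^p}\le\|f\|$.

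\emph{Surjectivity.} Let $h\in H^p(\mathbb D,\mathcal B(X,Y))$. For each $x\in X$, the function $h(\cdot)x$ belongs to $H^p(\mathbb D,Y)$ with norm at most $\|h\|\,\|x\|$. Since $Y$ has the ARNP, the Bukhvalov--Danilevich theorem gives $g_x\in H^p(\mathbb T,Y)$ with $P[g_x]=h(\cdot)x$, where $g_x$ is the a.e.\ radial limit. Fix a countable dense $\mathbb Q$-linear subspace $X_0\subset X$; this is where separability of $X$ is used. Off a single null set $\mathcal N$, all radial limits $g_x(t)=\lim_r h(rt)x$ exist for $x\in X_0$, and the map $x\mapsto g_x(t)$ is $\mathbb Q$-linear there. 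The key point is a pointwise bound $\|g_x(t)\|\le \Phi(t)\|x\|$ with $\Phi\in L^p$. For this I take $\Phi(t):=\liminf_{r\to1}\|h(rt)\|$, which satisfies $\|g_x(t)\|\le\Phi(t)\|x\|$ directly. To see that $\Phi\in L^p$, use Fatou's lemma for $p<\infty$, which gives $\|\Phi\|_p\le\sup_r M_p(h,r)=\|h\|$, and use the trivial bound for $p=\infty$. There is one subtlety: $\Phi$ could be infinite on a null set, and measurability of $\Phi$ needs checking. I will handle measurability by taking the liminf over rational $r$, using continuity of $h$. Off a null set, $x\mapsto g_x(t)$ is then bounded on $X_0$ and extends uniquely to $f(t)\in\mathcal B(X,Y)$ with $\|f(t)\|\le\Phi(t)$.

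\emph{Checking $f$ and the isometry.} For $x\in X_0$, $f(\cdot)x=g_x$ is strongly measurable. For general $x$, $f(\cdot)x$ is a pointwise limit of such functions, so it is measurable; hence $f$ is SOT measurable. It also agrees a.e.\ with $g_x$, because $g_{x_k}\to g_x$ in $L^p$ and a subsequence converges a.e. Therefore $f(\cdot)x\in H^p(\mathbb T,Y)$. Measurability of $N(f)$ follows from $N(f)(t)=\sup$ of $\|f(t)x\|$ over the countable unit-ball part of $X_0$. Together with $N(f)\le\Phi$, this gives $f\in H^p_{sot}$, $\|f\|\le\|h\|$, and $P_s[f]=h$. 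Combining with the first part, $\|h\|=\|P_s[f]\|\le\|f\|\le\|h\|$, which is the isometry. Injectivity then follows: if $P_s[f]=0$, then each $f(\cdot)x=0$ a.e. by injectivity of $P$ on $H^p(\mathbb T,Y)$, and running over $X_0$ and using continuity in $x$ gives $f=0$ a.e.

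The main obstacle is the surjectivity step, specifically building an honest operator-valued $f$ from the vector boundary functions $g_x$. The three difficulties there are the null sets depending on $x$, controlling $\|f(t)\|$ by an $L^p$ function, and measurability of $N(f)$. Separability of $X$ together with the $\liminf$ majorant $\Phi$ is the device I would use for all three.
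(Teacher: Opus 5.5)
Your proposal is correct and follows the paper's proof essentially step for step. Both use the contraction estimate $\|P_s[f](\zeta)\|\le P[N(f)](\zeta)$, radial limits on a countable dense set dominated by the Fatou majorant $\liminf_{r\to1}\|h_r(z)\|\in L^p$, and extension by density to an SOT measurable boundary function with $P_s[f]=h$.

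Your version is in places more careful than the paper's. Taking the dense set to be a subspace that is linear over $\mathbb Q$ (over $\mathbb Q+i\mathbb Q$ for complex scalars) makes the extension linear, and restricting to rational $r$ gives measurability of the $\liminf$. Your direct injectivity argument supplies the step $\|f\|\le\|P_s[f]\|$ for an arbitrary $f$, which the paper leaves implicit. Note, though, that this step is what turns the norm comparison for the constructed $f$ into an isometry for all $f$. So it should precede that comparison rather than ``follow from the isometry'' as your opening paragraph suggests.
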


The following corollary is immediate from Theorem \ref{mainthm}.

\begin{cor}
Let $D$ and $E$ be separable Hilbert spaces. \ Then,
for $1\leq p\leq \infty$, the mapping $f \mapsto P_s[f]$ is an
isometric isomorphism from $H^p_{sot}(\mathbb T, \mathcal B(D, E))$
onto $H^p(\mathbb D, \mathcal B(D, E))$. \ As a result, $H^p_{sot}(\mathbb T, \mathcal B(D, E))$ is the circle companion of  $H^p(\mathbb D, \mathcal B(D, E))$. 
\end{cor}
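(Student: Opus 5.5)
The plan is to read the Corollary off from Theorem \ref{mainthm} by specializing to $X=D$ and $Y=E$. The only thing to check is that the hypotheses of Theorem \ref{mainthm} hold for this choice. After that, I will reinterpret the conclusion in the language of circle companions introduced at the beginning of the paper.

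First, $X=D$ is a separable Hilbert space, so it is a separable Banach space. Second, we need $Y=E$ to satisfy the analytic Radon-Nikod\'ym property. Every Hilbert space is reflexive. Reflexive Banach spaces have the Radon-Nikod\'ym property (this is the Dunford--Pettis/Phillips theorem). The Radon-Nikod\'ym property implies the analytic Radon-Nikod\'ym property, because analytic martingales are a special class of martingales, or equivalently because a.e. radial limits exist for $H^1(\mathbb D, Y)$ whenever $Y$ has RNP. Alternatively, one can argue directly for a Hilbert space $E$: for $h\in H^1(\mathbb D,E)$, the coordinate functions with respect to an orthonormal basis are scalar $H^1$ functions, and one can check radial convergence in norm. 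Either way, $E$ has the ARNP. Theorem \ref{mainthm} therefore applies and yields that $f\mapsto P_s[f]$ is an isometric isomorphism from $H^p_{sot}(\mathbb T,\mathcal B(D,E))$ onto $H^p(\mathbb D,\mathcal B(D,E))$ for each $1\le p\le\infty$.

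For the final sentence, define for $h\in H^p(\mathbb D,\mathcal B(D,E))$ the boundary function $bh$ to be the unique $f\in H^p_{sot}(\mathbb T,\mathcal B(D,E))$ with $P_s[f]=h$. Existence of such an $f$ comes from surjectivity, and uniqueness comes from injectivity. The map $h\mapsto bh$ is then the inverse of an isometric isomorphism, so it is itself an isometric isomorphism onto $H^p_{sot}(\mathbb T,\mathcal B(D,E))$. This is exactly the defining property of the circle companion.

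It is also worth noting that $bh$ genuinely captures boundary values. For each $x\in D$, we have $(bh)(\cdot)x\in H^p(\mathbb T,E)$ and $h(\cdot)x=P[(bh)(\cdot)x]$. Since $E$ has the ARNP, the Bukhvalov--Danilevich result shows that $(bh)(z)x$ is the radial limit of $h(rz)x$ for a.e. $z\in\mathbb T$. Thus $bh$ is the strong (SOT) boundary function of $h$.

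There is no real obstacle here. The only nontrivial input is the implication ``Hilbert $\Rightarrow$ ARNP'', which is classical and can be cited.
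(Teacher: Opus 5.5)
Your proposal is correct and takes the same route as the paper, which simply declares the corollary immediate from Theorem~\ref{mainthm} with $X=D$ (separable) and $Y=E$ (a Hilbert space: reflexive, hence RNP, hence ARNP); your identification of $h\mapsto bh$ with the inverse of $P_s$ agrees with the paper's strong boundary function $b_sh$, which satisfies $P_s[b_sh]=h$. One small point: the a.e.\ radial convergence of $h(rz)x$ to $(bh)(z)x$ already follows from Lemma~\ref{sxacdfghnm}(c) applied to $(bh)(\cdot)x\in L^1(\mathbb T,E)$, so you do not need Bukhvalov--Danilevich for that remark.
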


\medskip
In Section 2, we give a few essential facts that will be needed to
prove Theorem \ref{mainthm}. \ Section 3 is devoted to a proof of
Theorem \ref{mainthm}. \ In the Appendix, we consider relevant
results for strong $H^p$-spaces.

\medskip

%
%
%
%

\section{Preliminaries}

We review here the preliminary background needed to prove the main
theorem, using \cite{TJML} and \cite{Ni2} as general references.
Let $m$ be the normalized Lebesgue measure on $\mathbb
T$. \ For a Banach space $X$, a function $f:\mathbb T \rightarrow X$
is said to be essentially separably valued if there exists a
Lebesgue measurable set $\mathbb T^\prime\subseteq \mathbb T$ such
that the range $f(\mathbb T^\prime)$ is separable and $m(\mathbb
T\setminus \mathbb T^\prime) =0$.

We begin with:

\medskip

\noindent {\bf Pettis Measurability Theorem} (\cite{TJML}). \ Let
$X$ be a  Banach space and $X^*$ denote the dual space of $X$. \ For
a function $f:\mathbb T \rightarrow X$, the following  are
equivalent:
\begin{itemize}
\item[(a)] $f$ is  strongly measurable
(i.e., there exists a sequence of simple functions $f_n$ such that
$f(z)=\lim_{n \to \infty} f_n(z)$ for almost all $z \in \mathbb T$);
\item[(b)] $f$ is essentially separably valued and
 weakly measurable (i.e., the
mapping $z\mapsto \langle f(z), x^*\rangle$ is Lebesgue measurable
for every $x^*\in X^*$).
\end{itemize}

\medskip

\noindent {\bf Observation}. \ By the Pettis Measurability Theorem,
the almost everywhere limit of a sequence of strongly measurable
functions is also strongly  measurable.

\medskip

Given a  function $f:\mathbb{T} \rightarrow X$, let
$$
N(f)(z):=\left\|f(z)\right\|_X.
$$
For $1\leq p\leq\infty$, let $L^p(\mathbb T,X)$ be the  space of all
(equivalence classes of) strongly measurable functions $f:\mathbb T
\rightarrow X$ such that $N(f)\in  L^p(\mathbb T)$. \ Endowed with
the norm
$$
||f||_{L^p(\mathbb T,\,  X)}:=||N(f)||_{L^p(\mathbb T)},
$$
the space $L^p(\mathbb T, X)$ is a Banach space. \ For $f \in
L^1(\mathbb T, X)$, the $n$-th Fourier coefficient of $f$, denoted
by $\widehat{f} (n)$, is defined by
$$
\widehat{f}(n):=\int_{\mathbb T}\overline{z}^{n}f(z)\, dm(z) \quad
\hbox{for each $n\in\mathbb Z$},
$$
where the integral is understood in the sense of the Bochner
integral. \ Also, $H^p(\mathbb T, X)$ is defined as the space of
functions $f\in L^p(\mathbb T, X)$ with $\widehat{f}(n)=0$ for
$n<0$.

\medskip

Hereafter, let $X$ and $Y$ be  Banach spaces and $\mathcal B(X, Y)$
denote the space of all bounded linear operators from $X$ to $Y$,
and abbreviate $\mathcal B(X, X)$ as $\mathcal B(X)$ \ We write
$\hbox{Hol}(\mathbb D, X)$ for the set of all $X$-valued functions
holomorphic in $\mathbb D$.

\medskip

\noindent{\bf Equivalent conditions of holomorphic functions}
(\cite{Ni2}). \ If $h:\mathbb D \rightarrow \mathcal B(X, Y)$, then
the following are equivalent.
\begin{itemize}
\item[(a)] $h \in \hbox{Hol}(\mathbb D, \mathcal B(X, Y))$;
\item[(b)] $h(\cdot)x\in \hbox{Hol}(\mathbb D, Y)$  for all $x \in X$;
\item[(c)] $\langle h(\cdot)x, y^* \rangle \in \hbox{Hol}(\mathbb D, \mathbb C)$  for all $x \in
X$ and $y^* \in Y^*$.
\end{itemize}

\medskip

Let us associate to any function $h: \mathbb D \to X$,
a family of functions $h_r$ on $\mathbb T$, defined by
$$
h_r(z):=h(rz)  \quad (z \in \mathbb{T}, \; 0\leq  r <1).
$$
For $1\leq p \leq \infty$, let $H^p(\mathbb D, X)$ be the space  of
all functions $h \in \hbox{Hol}(\mathbb D, X)$ satisfying
$$
||h||_{H^p(\mathbb D,\, X)}:=\sup\bigl\{||N(h_r)||_{L^p(\mathbb T)}:
r<1 \bigr\}<\infty.
$$
Then $H^p(\mathbb D, X)$ is a Banach space (cf. \cite{Do2}). \ If
$h \in \hbox{Hol}\,(\mathbb D, X)$, then we may write
$$
h(\zeta)=\sum_{n=0}^{\infty}x_n \zeta^n \quad(\zeta \in \mathbb D, \
x_n \in X).
$$
Hence for each $0\leq r<1$,
$$
h_r(z)=\sum_{n=0}^{\infty}x_n r^n z^n \quad(z \in \mathbb T),
$$
which implies that $h_r$ is essentially separably
valued. \ For each $x^* \in X^*$,
$$
\langle h_r(z), x^* \rangle=\sum_{n=0}^{\infty}\bigl \langle
x_nr^n,\, x^* \bigr\rangle z^n \quad(z \in \mathbb T),
$$
which implies that $h_r$ is weakly measurable. \  Thus, by the
Pettis Measurability Theorem, $h_r$ is strongly measurable.
Therefore we have that
$$
||h||_{H^p(\mathbb D,\, X)}=\sup_{0\leq r<1}||h_r||_{L^p(\mathbb
T,\,X)}.
$$

\medskip

For $f \in L^1(\mathbb T, X)$, let $P[f]$ denote the Poisson
integral of $f$ defined by
\begin{equation}\label{sdfhjudx}
P[f](\zeta):=\int_{\mathbb T}P_{\zeta}(z)f(z)dm(z) \quad (\zeta \in
\mathbb D),
\end{equation}
where $P_{\zeta}(z)$ is the Poisson kernel.

\medskip

The following are basic properties of Poisson integrals.

\begin{lem}\label{sxacdfghnm}\cite[Lemma 3.11.6.]{Ni2} If $f \in L^p(\mathbb T, X)$
($1\leq p\leq \infty$), then
\begin{itemize}
\item[(a)] $||(P[f])_r||_{L^p(\mathbb T,\, X)}\leq ||f||_{L^p(\mathbb
T,\, X)}$  \ for all $0\leq r<1$;
\item[(b)] If $p < \infty$, then $\lim_{r \to
1}||(P[f])_r-f||_{L^p(\mathbb T,\, X)}=0$;
\item[(c)] $\lim_{r \to 1}||(P[f])_r(z)-f(z)||_X=0$  \ for almost all $z \in \mathbb
T$.
\end{itemize}
\end{lem}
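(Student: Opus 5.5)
The plan is to reduce all three assertions to scalar facts about the Poisson kernel, using the representation
$$
(P[f])_r(z)=\int_{\mathbb T}P_r(z\overline{w})\,f(w)\,dm(w)\qquad(z\in\mathbb T,\ 0\le r<1),
$$
where $P_r(e^{it})=\frac{1-r^2}{|1-re^{it}|^2}\ge 0$ and $\int_{\mathbb T}P_r\,dm=1$. First I will check that $(P[f])_r$ belongs to $L^p(\mathbb T,X)$. For fixed $r<1$ the kernel $P_r(z\overline w)$ is bounded and jointly continuous, so $z\mapsto (P[f])_r(z)$ is continuous from $\mathbb T$ into $X$ (dominated convergence for Bochner integrals). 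Hence $(P[f])_r$ is strongly measurable.

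\emph{(a).} The Bochner integral satisfies $\|\int g\|\le\int\|g\|$, which gives the pointwise bound
$$
N\bigl((P[f])_r\bigr)(z)\le \int_{\mathbb T}P_r(z\overline w)\,N(f)(w)\,dm(w)=\bigl(P[N(f)]\bigr)_r(z).
$$
The right-hand side is the scalar Poisson integral of $N(f)\in L^p(\mathbb T)$. It has $L^p$ norm at most $\|N(f)\|_{L^p}$ by Young's inequality, since $\|P_r\|_{L^1}=1$ (for $p=\infty$ the bound is trivial). This proves (a).

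\emph{(b).} I will use a density argument, valid since $p<\infty$. Simple functions $g=\sum_{k=1}^n x_k\chi_{E_k}$ are dense in $L^p(\mathbb T,X)$. For such $g$, linearity gives
$$
(P[g])_r-g=\sum_k x_k\bigl((P[\chi_{E_k}])_r-\chi_{E_k}\bigr),
$$
and each scalar term tends to $0$ in $L^p(\mathbb T)$ by the classical theorem. For general $f$, choose a simple $g$ with $\|f-g\|_{L^p(\mathbb T,X)}<\varepsilon$. Then, by (a) applied to $f-g$,
$$
\|(P[f])_r-f\|\le\|(P[f-g])_r\|+\|(P[g])_r-g\|+\|g-f\|\le 2\varepsilon+\|(P[g])_r-g\|,
$$
so the $\limsup$ as $r\to1$ is at most $2\varepsilon$.

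\emph{(c).} This is the main obstacle, since pointwise convergence does not pass through density arguments. I plan a vector-valued Lebesgue-point argument. Because $f$ is strongly measurable, the Pettis Measurability Theorem lets me discard a null set so that $f(\mathbb T')$ has a countable dense subset $\{x_j\}$. For each $j$, the scalar function $\varphi_j(w):=\|f(w)-x_j\|_X$ lies in $L^1(\mathbb T)$. By the classical scalar Fatou theorem, $(P[\varphi_j])_r(z)\to\varphi_j(z)$ off a null set $Z_j$. Fix $z\in\mathbb T'\setminus\bigcup_j Z_j$. Using $\int P_r=1$ and the triangle inequality $\|f(w)-f(z)\|\le \varphi_j(w)+\|x_j-f(z)\|$, we get
$$
\|(P[f])_r(z)-f(z)\|\le \int_{\mathbb T}P_r(z\overline w)\,\|f(w)-f(z)\|\,dm(w)\le (P[\varphi_j])_r(z)+\|x_j-f(z)\|.
$$
The right-hand side tends to $2\|f(z)-x_j\|$ as $r\to1$. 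Choosing $j$ with $\|f(z)-x_j\|$ arbitrarily small then yields (c) for almost every $z$. The key point is that only countably many scalar null sets are discarded, which the essential separability of the range guaranteed by the Pettis Measurability Theorem makes possible.
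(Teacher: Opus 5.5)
Your proposal is correct. The paper gives no proof of this lemma to compare against: it quotes the statement from \cite[Lemma 3.11.6]{Ni2}. Your write-up therefore supplies a self-contained argument. It relies only on classical scalar facts plus the Pettis Measurability Theorem, which the paper already states in its preliminaries.

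Parts (a) and (b) are the standard approximate-identity arguments. For (a) you use the pointwise domination $N\bigl((P[f])_r\bigr)\le (P[N(f)])_r$ followed by Young's inequality. For (b) you use density of simple functions together with (a) applied to $f-g$.

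For (c), you use essential separability to reduce to countably many scalar Fatou theorems for $\varphi_j=\|f(\cdot)-x_j\|_X$. This is the usual proof that almost every point is a Lebesgue point of a Bochner integrable function. The bound $\limsup_{r\to1}\|(P[f])_r(z)-f(z)\|\le 2\|f(z)-x_j\|$ then closes it correctly.

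The other common route to (c) is density plus a maximal inequality. There one uses $\sup_r\|(P[f-g])_r(z)\|\le C\,M\bigl(N(f-g)\bigr)(z)$ and the weak type $(1,1)$ bound for the Hardy--Littlewood maximal operator. That route needs harmonic-analysis machinery but no separability. Yours avoids maximal functions by spending the Pettis theorem instead, which fits this paper well: strong measurability, and its failure for operator-valued functions, is exactly its central theme.
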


\medskip

On the other hand, the function $P: H^p(\mathbb T, X) \to
H^p(\mathbb D, X)$ given by (\ref{sdfhjudx}), is an isometry for all
$1 \le p \le \infty$ (cf. \cite{BL}). \ As we noticed in
the introduction, if $X$ has the ARNP and $1\leq p\leq\infty$, then
the function $P: H^p(\mathbb T, X) \to H^p(\mathbb D, X)$ given by
(\ref{sdfhjudx}) is an isometric isomorphism (cf. \cite{BD}). \
However, the function $P:H^p(\mathbb T, \mathcal B(D,E))\to
H^p(\mathbb D, \mathcal B(D,E))$ given by (\ref{sdfhjudx}) is not
onto in general, as we see in the following example.

\medskip

\begin{ex}\label{ex23} Let $h:\mathbb D \rightarrow
\mathcal B(\ell^2)$ be defined by $(h(\zeta)x)(n):=\zeta^nx(n)$ for
each $x\in \ell^2$. \ Then $h \in \hbox{Hol}(\mathbb D, \mathcal
B(\ell^2))$ and $||h||_{H^{\infty}(\mathbb D,\, \mathcal B
(\ell^2))}= 1$, so that $h\in  H^p(\mathbb D, \mathcal B (\ell^2))$
for all $1\leq p\leq \infty$. \ Suppose that there
exists $p\in [1, \infty]$ such that $P:H^p(\mathbb T, \mathcal
B(\ell^2))\to H^p(\mathbb D, \mathcal B(\ell^2))$ is onto. \ Then
there exists a function $f \in H^p(\mathbb T, \mathcal B(\ell^2))$
such that $P[f]=h$. \ For each $z \in \mathbb T$, define a ``strong
boundary function" $bh:\mathbb T\to \mathcal B(\ell^2)$ by
\begin{equation}\label{vedkwk}
(bh)(z)x:=\lim_{r \to 1}h_r(z)x=(z^n x(n)) \quad (x\equiv (x(n)) \in
\ell^2).
\end{equation}
Then it follows from Lemma \ref{sxacdfghnm}(c) that for all $x \in
\ell^2$,
$$
(bh)(z)x=\lim_{r \to 1}(P[f])_r(z)x=f(z)x
$$
for almost all $z \in \mathbb T$, which implies $f=bh$. \ Let $z_1
\neq z_2$ in $\mathbb T$. \ For $k=1,2$, write $z_k=e^{i\theta_k}$
($0\leq \theta_k< 2\pi$). \ Then there exists $n_0\in\mathbb N$ such
that $\frac{\pi}{2}<n_0|\theta_2-\theta_1|\leq\pi$ ($\hbox{mod} \
2\pi$). \ Let $\{e_n:n=1,2,\cdots\}$ be the canonical orthonormal
basis for $\ell^2$. \ Then it follows from (\ref{vedkwk}) that
$$
\left\|\bigl(f(z_1)-f(z_2)\bigr)
e_{n_0}\right\|_{\ell^2}=|z_1^{n_0}-z_2^{n_0}|
=|1-(z_2\overline{z_1})^{n_0}|
>\sqrt{2},
$$
which implies that $f$ is not essentially separably valued. \ Thus,
by the Pettis Measurability Theorem, $f$ is not strongly measurable,
a contradiction. \ Therefore, $P:H^p(\mathbb T, \mathcal
B(\ell^2))\to H^p(\mathbb D, \mathcal B(\ell^2))$ is not onto for
any $p\in[1,\infty]$. \qed
\end{ex}

\bigskip

%
%
%
%

\section{Proof of the main result}

\medskip

A function $f:\mathbb T\to \mathcal B(X,Y)$ is called {\it SOT
measurable} if the mapping $z\mapsto f(z)x$ is strongly measurable
for every $x\in X$.

We introduce a new normed space.

\begin{df} For $1\le p\le\infty$, define $L^p_{sot}(\mathbb T, \mathcal B(X,
Y))$ by the space of all (equivalence classes of)  SOT measurable
functions $f:\mathbb T\to \mathcal B(X, Y)$ such that $N(f)\in
L^p(\mathbb T)$; we identify $f$ and $g$ when $f(z)=g(z)$ for almost
all $z\in\mathbb T$. \ In this case, define
$$
||f||_{L^p_{sot}(\mathbb T,\,  \mathcal
B(X,Y))}:=||N(f)||_{L^p(\mathbb T)}.
$$
\end{df}

\medskip

We can easily check that $L^p_{sot}(\mathbb T, \mathcal B(X, Y))$ is
a normed space and \linebreak $L^q_{sot}(\mathbb T, \mathcal B(X,
Y))\subseteq L^p_{sot}(\mathbb T, \mathcal B(X, Y))$ if $1\le p\le
q\le\infty$. \ Further, the space $L^p_{sot}(\mathbb T, \mathcal
B(X,Y))$ is  a Banach space.

\medskip

\begin{lem}\label{lem377}
For $1\le p\le\infty$, $L^p_{sot}(\mathbb T,\, \mathcal B(X, Y))$ is
a Banach space.
\end{lem}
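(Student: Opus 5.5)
We prove completeness with the standard Riesz--Fischer argument: a normed space is complete if and only if every absolutely convergent series converges.

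\textbf{Setup.} Let $(f_k)$ be a sequence in $L^p_{sot}(\mathbb T,\mathcal B(X,Y))$ with
$$\sum_k \|f_k\|_{L^p_{sot}}=\sum_k\|N(f_k)\|_{L^p(\mathbb T)}<\infty.$$
Put $g:=\sum_k N(f_k)$, a $[0,\infty]$-valued function. Each $N(f_k)$ must be measurable, since it lies in $L^p(\mathbb T)$ by definition. By the monotone convergence theorem and Minkowski's inequality (in the $L^\infty$ case, directly), $\|g\|_{L^p}\le \sum_k\|f_k\|<\infty$. Hence $g<\infty$ off a null set $Z$.

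\textbf{Constructing the limit.} For $z\notin Z$, the series $\sum_k f_k(z)$ converges absolutely in the Banach space $\mathcal B(X,Y)$. Define
$$f(z):=\sum_k f_k(z) \quad (z\notin Z), \qquad f(z):=0 \quad (z\in Z).$$

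\textbf{SOT measurability of $f$.} Fix $x\in X$. The partial sums $z\mapsto\sum_{k\le n}f_k(z)x$ are strongly measurable $Y$-valued functions, because each $f_k$ is SOT measurable. They converge to $f(z)x$ for almost every $z$. By the Observation following the Pettis Measurability Theorem, $z\mapsto f(z)x$ is strongly measurable. Hence $f$ is SOT measurable.

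\textbf{Measurability of $N(f)$.} This is the step I expect to be the main obstacle: the definition of $L^p_{sot}$ demands $N(f)\in L^p$, and measurability of $N(f)$ is not automatic from SOT measurability. I would handle it as follows.

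\begin{itemize}
\item First try: pass to a countable dense set $\{x_j\}$ of the unit ball, writing $N(f)(z)=\sup_j\|f(z)x_j\|$. This would give measurability at once, but it needs $X$ separable, which Lemma \ref{lem377} does not assume.
\item Argument without separability: for $z\notin Z$ we have $\|f(z)-\sum_{k\le n}f_k(z)\|\le\sum_{k>n}N(f_k)(z)\to0$. Hence
$$N(f)(z)=\lim_n N\Bigl(\sum_{k\le n}f_k\Bigr)(z)\quad\text{a.e.}$$
So $N(f)$ is measurable provided each $N(\sum_{k\le n}f_k)$ is measurable.
\item That finite-sum measurability holds if $L^p_{sot}$ is closed under addition; this is the content of the paper's remark that $L^p_{sot}$ is a normed space, which I take as given. (If it cannot be taken as given, one would reduce to the separable case, or interpret $N(f)$ via the measurable majorant used implicitly in that remark.)
\end{itemize}

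\textbf{Norm estimates and convergence.} Since $N(f)\le g$ a.e., we get $N(f)\in L^p$, so $f\in L^p_{sot}(\mathbb T,\mathcal B(X,Y))$. Moreover,
$$N\Bigl(f-\sum_{k\le n}f_k\Bigr)\le \sum_{k>n}N(f_k)\quad\text{a.e.},$$
and the $L^p$ norm of the right-hand side is at most $\sum_{k>n}\|f_k\|\to0$. Therefore $\sum_k f_k$ converges to $f$ in $L^p_{sot}(\mathbb T,\mathcal B(X,Y))$, and the space is complete.
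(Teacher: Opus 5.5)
Your proof is correct and is essentially the paper's argument. The paper runs the same Riesz--Fischer scheme, using a rapidly Cauchy subsequence instead of an absolutely convergent series, and gets SOT measurability of the limit exactly as you do (a.e.\ convergence of $f(\cdot)x$ plus the Pettis observation); it leaves implicit both the measurability of $N(f)$ and the vector-space structure you invoke for it. Your caution on that point is warranted: with $P_z x:=x(z)x_z$ on $\ell^2(\mathbb T)$ as in Remark~\ref{35111}, the SOT measurable functions $f_1(z)=P_z$ and $f_2(z)=(2\mathbf 1_F(z)-1)P_z$ have $N(f_1)=N(f_2)\equiv 1$ but $N(f_1+f_2)=2\mathbf 1_F$, so closure under addition genuinely needs something like separability of $X$, and when $X$ is separable your first bullet (Lemma~\ref{sdsacdny}) settles the matter directly.
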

\begin{proof} The proof follows from a slight variation of the standard proof (cf.
\cite{Ru}) for the completeness of scalar-valued $L^p$-spaces,
except for SOT-measurability. \ To be completely
rigorous, we sketch a proof of the validity of SOT-measurability.

Suppose $(f_n)$ is a Cauchy sequence in $L^p_{sot}(\mathbb T,\,
\mathcal B(X, Y))$. \ Then we can choose a subsequence $(f_{n_i})$
such that
$$
||f_{n_{i+1}}-f_{n_i}||_{L^p_{sot}(\mathbb T,\, \mathcal B(X,
Y))}<2^{-i} \ \hbox{for all} \ i=1,2,3,\cdots.
$$
If we put $g:=\sum_{i=1}^\infty (f_{n_{i+1}}-f_{n_i})$, then it is
easy to show that $g(z)\in \mathcal B(X,Y)$ for almost all
$z\in\mathbb T$ and in turn,
$$
f(z):=f_{n_1}(z)+g(z)
$$
converges for almost all $z \in \mathbb T$. \ Therefore for each
$x\in X$, $f(z)x=\lim_{i\to\infty} f_{n_i}(z)x$ for almost all
$z\in\mathbb T$. \ Since $f_{n_i}$ is SOT measurable, the mapping
$z\mapsto f_{n_i}(z)x$ is strongly measurable, so that the mapping
$z\mapsto f(z)x$ is also strongly measurable. \ Therefore $f$ is SOT
measurable.
\end{proof}

\medskip

\begin{rem}\label{35111}
In the definition of $L^p_{sot}(\mathbb T, \mathcal B(X, Y))$, we
implicitly suppose $N(f)$ is (Lebesgue) measurable. \ In fact, we
don't guarantee that if $f$ is SOT measurable then $N(f)$ is
measurable in general. \ To see this, let $\ell^2(\mathbb T)$ be the
set of all functions $x: \mathbb T \to \mathbb C$ such that $x(z)=0$
for all but a countable number of $z$'s and $\sum_{z \in \mathbb
T}|x(z)|^2 <\infty$. \ For $x$ and $y$ in $\ell^2(\mathbb T)$ define
$$
\langle x, y \rangle:=\sum_{z\in \mathbb T}x(z)\overline{y(z)}.
$$
 Then
$\ell^{2}(\mathbb T)$ is a (non-separable) Hilbert space. \ Let $F$
be a nonmeasurable  set in $\mathbb T$.\ For $z \in \mathbb T$, let
$f:\mathbb T\to \mathcal B(\ell^2(\mathbb T))$ be defined by
$$
(f(z)x)(s):=\begin{cases} x(z), \quad \  \hbox{if} \ s=z \in F\\
0,  \qquad  \  \ \hbox{if} \  z \notin F \ \hbox{or} \  s\neq z.
\end{cases}
$$
Then for each $x \in \ell^2(\mathbb T)$,  we have that $f(z)x=0$ for
almost all $z \in \mathbb T$, and hence $f$ is SOT measurable.

We now claim that
\begin{equation}\label{36003}
N(f)=\mathbf{1}_F  \quad ({\mathbf 1}_F \ \hbox{denotes the
indicator function of the set} \  F),
\end{equation}
which implies that $N(f)$ is not measurable because $F$ is a non
measurable set. \ To see this, for each $z \in \mathbb T$, let
$$
x_z(s):=\begin{cases} 1,  &\hbox{if} \ s=z\\
0,  &\hbox{if} \ s \neq z.
\end{cases}
$$
Then, $x_z \in \ell^2(\mathbb T)$ and $||x_z||=1$. \ If $z \in F$,
then $ (f(z)x_z)(s)=x_z(s)$, so that $||f(z)x_z||_{\ell^2(\mathbb
T)}=1$. \ But since $f(z)$ is a contraction, it follows that
$N(f)(z)=1$ for all $z \in F$.
If instead $z \notin F$, then $f(z)=0$, so that $N(f)(z)=0$. \ This
proves (\ref{36003}). \qed
\end{rem}

\medskip
 We note that in the above remark, $\ell^2(\mathbb T)$ is not
separable. \ However, we can show  that the
SOT-measurability of $f$ implies the measurability of $N(f)$ if $X$
is a separable Banach space.

\begin{lem}\label{sdsacdny}
Let $X$ be a separable Banach  space. \ If $f:\mathbb T \to \mathcal
B(X, Y)$ is SOT measurable then $N(f)$ is measurable.
\end{lem}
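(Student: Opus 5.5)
Since $X$ is separable, I fix a countable dense subset $\{x_k\}_{k\ge1}$ of the closed unit ball of $X$; for instance, the points $x/\max(1,\|x\|)$ with $x$ running over a countable dense subset of $X$. The plan is to write $N(f)$ as a countable supremum of measurable functions. For every $z\in\mathbb T$, the operator $f(z)$ is bounded, hence continuous on the unit ball, and so
$$
N(f)(z)=\|f(z)\|_{\mathcal B(X,Y)}=\sup_{\|x\|\le 1}\|f(z)x\|_Y=\sup_{k\ge 1}\|f(z)x_k\|_Y .
$$
To justify the last equality, take any $x$ with $\|x\|\le1$ and $\varepsilon>0$. Choose $x_k$ with $\|x-x_k\|<\varepsilon$. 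Then $\|f(z)x\|\le\|f(z)x_k\|+\|f(z)\|\varepsilon$. Letting $\varepsilon\to0$ gives the claim. This holds pointwise for every $z$, with no null set involved.

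Next I show that each function $z\mapsto\|f(z)x_k\|_Y$ is Lebesgue measurable. By SOT measurability, $z\mapsto f(z)x_k$ is strongly measurable. So there are simple functions $s_n$ with $s_n(z)\to f(z)x_k$ for almost every $z$. Each $\|s_n(\cdot)\|_Y$ is a simple scalar function. By continuity of the norm, $\|s_n(z)\|\to\|f(z)x_k\|$ almost everywhere. Lebesgue measure is complete, so an a.e. limit of measurable functions is measurable.

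Finally, the pointwise supremum of countably many measurable $[0,\infty)$-valued functions is measurable, since $\{N(f)>t\}=\bigcup_k\{\|f(\cdot)x_k\|>t\}$. Hence $N(f)$ is measurable.

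The only point needing care is the reduction to a countable supremum, and separability of $X$ is exactly what makes it work. Remark \ref{35111} shows that the conclusion can fail without separability, so no argument avoiding this reduction should be expected. The rest is routine.
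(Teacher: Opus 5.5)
Your proof is correct and takes essentially the same approach as the paper. Both write $N(f)(z)$ as a countable supremum of the measurable functions $z\mapsto\|f(z)x\|$: the paper uses $\sup\{\|f(z)x\|/\|x\| : 0\neq x\in X_0\}$ with $X_0$ countable dense in $X$, you normalize into the unit ball, and you also spell out the density and measurability details that the paper leaves to the reader.
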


\begin{proof} Suppose that  $f:\mathbb T \to \mathcal B(X, Y)$ is
SOT measurable. \ Then for all $x \in X$, the mapping $z
\mapsto f(z)x$ is strongly measurable, and hence the
mapping $z \mapsto ||f(z)x||$ is measurable. \ Thus the
mapping $z \mapsto \frac{||f(z)x||}{||x||}$ is
measurable for all nonzero $x \in X$. \ Choose a
countable dense subset $X_0$ of $X$. \ Then we can easily see that
$$
N(f)(z)=\sup \biggl\{\frac{||f(z)x||}{||x||}: 0 \neq
 x\in X_0\biggr\}.
$$
Thus the mapping $z\mapsto N(f)(z)$ is measurable.
\end{proof}

\medskip

We now introduce a space which fits our purpose:

\begin{df} For $1\le p\le \infty$, let
$H^p_{sot}(\mathbb T, \mathcal B(X, Y))$ be the space of  all
(equivalence classes of) functions $f\in L^p_{sot}(\mathbb T,
\mathcal B(X, Y))$ such that $f(\cdot)x\in H^p(\mathbb T,Y)$ for
every $x\in X$.
\end{df}

\medskip

Observe that for $1\le p\le\infty$, $H^p_{sot}(\mathbb T,\, \mathcal
B(X, Y))$ is a closed subspace of  $L^p_{sot}(\mathbb T,\, \mathcal
B(X, Y))$, so that by Lemma \ref{lem377}, $H^p_{sot}(\mathbb T,\,
\mathcal B(X, Y))$ is a Banach space.

\begin{ex}\label{hsot}
In general, $H^p(\mathbb T,\mathcal B(X,Y))\ne H^p_{sot}(\mathbb T,
\mathcal B(X,Y))$ for all $1\leq p\leq \infty$. \ To see this, let
$H^2\equiv H^2(\mathbb T)$ and define the function $f:\mathbb T
\rightarrow \mathcal B(H^2)$ by
$$
f(z)x(s):=x(zs).
$$
Since the set of all polynomials on $\mathbb T$ is dense in
$H^2$, it follows that the mapping
$z \mapsto f(z)x$ is (uniformly) continuous for each $x \in H^2$. \
Thus, by the Pettis Measurability Theorem,  $f$ is SOT measurable. \
Since $ N(f)(z)=1$ for all $z \in \mathbb T$, it follows that $f \in
L^{\infty}_{sot}(\mathbb T, \mathcal B(H^2))$ with
$||f||_{L^{\infty}_{sot}(\mathbb T,\, \mathcal B(H^2))}=1$. \
Moreover for each $x\in H^2$ and $n\in \mathbb Z$,
$$
\bigl(\widehat{f}(n)x \bigr)(s)=\int_{\mathbb
T}\overline{z}^nf(z)x(s)dm(z)=\bigl \langle x(zs),  \ z^n \bigr
\rangle_{H^2} =\widehat{x}(n)s^n,
$$
which implies that $f \in H^{\infty}_{sot}(\mathbb T,
\mathcal B(H^2))\subseteq H^{p}_{sot}(\mathbb T, \mathcal B(H^2))$
for all $1\leq p\leq \infty$. \ However we have that
$f\notin H^p(\mathbb T, \mathcal B(H^2))$. \ To see
this we use the same argument as Example \ref{ex23}. \ Let $z_1 \neq
z_2$ in $\mathbb T$. \ Write $z_k=e^{i\theta_k}$ ($0\leq \theta_k<
2\pi$). \ Then there exists $n_0\in\mathbb N$ such that
$\frac{\pi}{2}<n_0|\theta_2-\theta_1|\leq\pi$ ($\hbox{mod} \ 2\pi$).
We thus have
$$
\aligned ||(f(z_1)-f(z_2))s^{n_0}||_{H^2}^2&=\int_{\mathbb
T}|(z_1s)^{n_0}
-(z_2s)^{n_0}|^2dm(s)\\
&=\int_{\mathbb T}|1-(z_2\overline{z_1})^{n_0}|^2dm(s)>2,
\endaligned
$$
which implies that $f$ is not essentially separably valued. \ Thus,
by the Pettis Measurability Theorem, $f$ is not strongly measurable,
so that, $f \notin H^p(\mathbb T, \mathcal B(H^2))$.
\qed
\end{ex}

\medskip

\begin{df}\label{dfecfregvryhb}
For $f \in H^1_{sot}(\mathbb T, \mathcal B(X, Y))$ and $x \in X$,
let $P_s[f](\cdot)x:\mathbb D \rightarrow Y$ be defined by
$$
P_s[f](\zeta)x:=P[f(\cdot)x](\zeta) \quad (\zeta \in \mathbb D),
$$
where $P[\cdot]$ denotes the Poisson integral. \ In this
case, $P_s[f]$ is called the {\it strong Poisson integral} of $f$.
\end{df}

\medskip

\begin{lem}\label{scxwesrftvsryhik}
For $1\leq p\leq \infty$, the mapping $f \mapsto P_s[f]$ is a
contraction from $H^p_{sot}(\mathbb T, \mathcal B(X, Y))$ to
$H^p(\mathbb D, \mathcal B(X, Y))$.
\end{lem}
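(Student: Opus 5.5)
To prove Lemma \ref{scxwesrftvsryhik}, we fix $f\in H^p_{sot}(\mathbb T,\mathcal B(X,Y))$ and write $h:=P_s[f]$. The plan is to check, in order, that each $h(\zeta)$ is a bounded linear operator, that $h$ is holomorphic, and that $\|h\|_{H^p(\mathbb D,\,\mathcal B(X,Y))}\le\|f\|_{L^p_{sot}}$. Linearity of $f\mapsto P_s[f]$ is clear from the linearity of the Poisson integral.

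\emph{Step 1: each $h(\zeta)$ is in $\mathcal B(X,Y)$.} Since $p\ge1$, we have $H^p_{sot}\subseteq H^1_{sot}$. For each $x\in X$, $f(\cdot)x\in H^p(\mathbb T,Y)\subseteq L^1(\mathbb T,Y)$, so $P[f(\cdot)x](\zeta)$ is a well-defined Bochner integral. It depends linearly on $x$ because $f(z)$ is linear and Bochner integration is linear. For the bound we use the Bochner inequality:
$$
\|h(\zeta)x\|_Y\le\int_{\mathbb T}P_\zeta(z)\|f(z)x\|\,dm(z)\le \|x\|\int_{\mathbb T}P_\zeta(z)N(f)(z)\,dm(z)=\|x\|\,P[N(f)](\zeta),
$$
which is finite because $N(f)\in L^p\subseteq L^1$. (Measurability of $N(f)$ is built into the definition of $L^p_{sot}$, and Lemma \ref{sdsacdny} guarantees it when $X$ is separable.) Hence
$$
\|h(\zeta)\|\le P[N(f)](\zeta).
$$

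\emph{Step 2: holomorphy.} For each $x$, $f(\cdot)x\in H^1(\mathbb T,Y)$ has vanishing negative Fourier coefficients. Expanding the Poisson kernel as $\sum_{n}\overline{z}^{\,n}\zeta^n+\sum_{n\ge1}z^n\overline\zeta^{\,n}$, which converges uniformly in $z$ for fixed $\zeta$, and integrating term by term gives
$$
P[f(\cdot)x](\zeta)=\sum_{n\ge0}\widehat{f(\cdot)x}(n)\zeta^n .
$$
So $h(\cdot)x\in\hbox{Hol}(\mathbb D,Y)$ for every $x\in X$. By the equivalent conditions for holomorphic functions quoted from \cite{Ni2} ((b)$\Rightarrow$(a)), $h\in\hbox{Hol}(\mathbb D,\mathcal B(X,Y))$.

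\emph{Step 3: the norm estimate.} From Step 1, $N(h_r)(z)\le P[N(f)](rz)=(P[N(f)])_r(z)$ for every $z\in\mathbb T$. Since $N(h_r)$ is measurable (as observed in the preliminaries for holomorphic functions), taking $L^p$ norms and applying Lemma \ref{sxacdfghnm}(a) in the scalar case $X=\mathbb C$ gives
$$
\|N(h_r)\|_{L^p(\mathbb T)}\le\|(P[N(f)])_r\|_{L^p(\mathbb T)}\le\|N(f)\|_{L^p(\mathbb T)}
$$
for all $r<1$. Taking the supremum over $r$ yields $\|P_s[f]\|_{H^p(\mathbb D,\,\mathcal B(X,Y))}\le\|f\|_{L^p_{sot}}$, so the map is a contraction. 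Well-definedness on equivalence classes follows because a.e.-equal functions have the same Poisson integrals.

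The main subtlety is the pointwise domination in Step 1. It requires moving the operator norm inside the integral even though $f$ itself need not be Bochner integrable as a $\mathcal B(X,Y)$-valued function. The way around this is to work vectorwise with $f(\cdot)x$ and the scalar majorant $N(f)$, exactly as above. Holomorphy, by contrast, is routine once the vectorwise statement is in hand, since the quoted equivalence then applies directly.
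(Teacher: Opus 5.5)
Your proposal is correct and follows essentially the same route as the paper. Both arguments get the vectorwise domination $\|P_s[f](\zeta)x\|\le \|x\|\,P[N(f)](\zeta)$, deduce holomorphy of $P_s[f]$ from the holomorphy of each $P_s[f](\cdot)x$ via the quoted equivalence, and finish with Lemma \ref{sxacdfghnm}(a) applied to the scalar function $N(f)$.

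The differences are cosmetic. For boundedness the paper first records the cruder bound $\frac{1+r}{1-r}\|f\|_{L^1_{sot}}\|x\|$, and for holomorphy it simply asserts $P_s[f](\cdot)x\in H^1(\mathbb D,Y)$. You instead justify holomorphy explicitly through the Fourier expansion of the Poisson kernel.
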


\begin{proof}
Let $f \in H^p_{sot}(\mathbb T, \mathcal B(X, Y))$ ($1\leq p\leq
\infty$) and $\zeta=re^{i \theta} \in \mathbb D$. \ Clearly,
$P_s[f](\zeta)$ is linear on $X$. \ For each $x \in X$,
$$
\aligned ||P_s[f](\zeta)x||&=\Bigl|\Bigl|\int_{\mathbb T}P_{\zeta}(z)f(z)xdm(z)\Bigr|\Bigr|\\
&\leq \frac{1+r}{1-r} \cdot||f||_{L^1_{sot}(\mathbb T,\, \mathcal
B(X, Y))}||x||,
\endaligned
$$
which implies that $P_s[f](\zeta) \in \mathcal B(X, Y)$.  \ Since
$P_s[f](\cdot)x\in H^1(\mathbb D, Y)$ for every $x\in X$, it follows
$P_s[f] \in \hbox{Hol}\, (\mathbb D,  \mathcal B(X, Y))$. \ We now
claim that
$$
P_s[f] \in  H^p(\mathbb D,\, \mathcal B(X, Y)) \ \hbox{and} \
||P_s[f]||_{H^p(\mathbb D,\, \mathcal B(X, Y))}\leq
||f||_{L^p_{sot}(\mathbb T,\, \mathcal B(X, Y))}.
$$
For each $\zeta \in
\mathbb D$ and a unit vector $x\in X$,
$$
||P_s[f](\zeta)x|| \leq \int_{\mathbb T}P_{\zeta}(z)||f(z)||dm(z)
=P[N(f)](\zeta).
$$
Thus $||P_s[f](\zeta)||\leq P[N(f)](\zeta)$ for all $\zeta \in
\mathbb D$ and hence, by Lemma \ref{sxacdfghnm}(a), we have
$$
||(P_s[f])_r||_{L^p(\mathbb T,\, \mathcal B(X, Y))}\leq \bigl|
\bigl|(P[N(f)])_r\bigr|\bigr|_{L^p(\mathbb T)}\le
||f||_{L^p_{sot}(\mathbb T,\, \mathcal B(X, Y))},
$$
which implies that $P_s[f]\in H^p(\mathbb D, \mathcal B(X, Y))$ and
$$
||P_s[f]||_{H^p(\mathbb D,\, \mathcal B(X, Y))}\leq
||f||_{L^p_{sot}(\mathbb T,\, \mathcal B(X, Y))}.
$$
This completes the proof.
\end{proof}

\medskip

We are ready to prove our main theorem. \ Before doing it, we would
like to underline a reason why our proof is little intricate. \ Let $h
\in H^1(\mathbb D, \mathcal B(X, Y))$ and assume that $Y$ has the
ARNP. \  Since $h(\cdot)x \in H^1(\mathbb D, Y)$ for each $x\in X$,
there exists the following radial strong limit $bh$
a.e. on $\mathbb T$: i.e., for each $x\in X$,
$$
bh(z)x:=\lim_{r \to 1}h_r(z)x \quad (z \in \mathbb T).
$$
Write
$$
E_x:=\{z \in \mathbb T: bh(z)x \ \hbox{does not exist}\} \quad
\hbox{and} \quad E:=\bigcup_{x \in X}E_x.
$$
Then $m(E_x)=0$ for each $x \in X$, but we don't guarantee $m(E)=0$.
Thus the function $bh$ may not be defined almost everywhere on
$\mathbb T$. \ Therefore $bh$ is not appropriate for a boundary
function of $h$. \ The crucial point of our proof is how to construct
a ``boundary function" defined almost everywhere on $\mathbb T$ for
a function in $H^p(\mathbb D, \mathcal B(X, Y))$.

\medskip

We will now prove Theorem \ref{mainthm}, which we restate for the
reader's convenience:

\medskip

\noindent{\bf Theorem 1.1.}\label{sdfvghfnj} Let $X$ be a separable
Banach space and $Y$ be a Banach space satisfying the analytic
Radon-Nikod\'ym property. \ Then, for $1\leq p\leq \infty$, the
mapping $f \mapsto P_s[f]$ is an isometric isomorphism  from
$H^p_{sot}(\mathbb T, \mathcal B(X, Y))$ onto $H^p(\mathbb D,
\mathcal B(X, Y))$.

\medskip

\begin{proof}  Let $X$ be a separable
Banach space and $Y$ be a Banach space satisfying the analytic
Radon-Nikod\'ym property. \ Let $h \in H^1(\mathbb D, \mathcal B(X,
Y))$.  \ Our first task is to define a ``boundary function" $b_sh$
a.e. on $\mathbb T$ for $h$. \ To do so, let $X_0=\{x_n \in
X:n=1,2,\cdots\}$ be a countable dense subset of $X$. \ Then for
each $n=1,2,\cdots$, there exists a measurable set
$E_n$ with $m(E_n)=0$ such that $bh(z)x_n=\lim_{r \to
1}h_r(z) x_n$ exists for all $z \in \mathbb T \setminus E_n$. \
Then $bh(\cdot)x_n\in H^1(\mathbb T, Y)$ for each
$n=1,2,\cdots$. \ Put $E_0:=\cup_{n\geq 1}E_n$. \ Then
$m(E_0)=0$. \ For $z \in \mathbb T \setminus E_0$, let
\begin{equation}\label{edacvvgtehtry}
q(z):=\sup\left\{\frac{||bh(z)x||}{||x||}: 0 \neq x\in X_0\right\}.
\end{equation}
Observe that for all $z \in \mathbb T \setminus E_0$ and each $x \in
X_0$,
\begin{equation}\label{xsdfdsadfdcsxazscd}
||bh(z)x||=\lim_{r \to 1}||h_r(z)x||\leq \liminf_{r \to
1}||h_r(z)||\cdot||x||.
\end{equation}
Let $u(z):=\liminf_{r \to 1}N(h_r)(z)$.\ Since $h \in
H^1(\mathbb D, \mathcal B(X, Y))$, $N(h_r)$ is in
$L^1(\mathbb T)$ for each $0\leq r <1$, so that $u$ is measurable.
Also by (\ref{edacvvgtehtry}) and
(\ref{xsdfdsadfdcsxazscd}), we have
\begin{equation}\label{xscvzxfdcsxazscd}
0\leq q(z)\leq u(z) \ \hbox{for all} \  z \in \mathbb T \setminus
E_0.
\end{equation}
On the other hand, by Fatou's lemma, we have
$$
\int_{\mathbb T}u(z)dm(z)\leq \hbox{lim inf}_{r \to 1} \int_{\mathbb
T}N(h_r)(z)dm(z) \leq ||h||_{H^1(\mathbb D,\, \mathcal B(X,
Y))}<\infty,
$$
which implies that $u\in L^1(\mathbb T)$. \ Thus there exists a
subset  $E_u$ of $\mathbb T$ with $m(E_u)=0$ such that $u(z)<\infty$
for all $z \in \mathbb T \setminus E_u$. \ Hence, by
(\ref{xscvzxfdcsxazscd}), $q(z)\leq u(z)<\infty$ for  all $z \in
\mathbb T \setminus(E_0 \cup E_u)$. \ Therefore $bh(z)$ can be
extended to a bounded linear operator $b_sh(z)$ on $X$ for almost
all $z \in \mathbb T$: for each $z \in \mathbb T \setminus(E_0 \cup
E_u)$ and $x \in X$, define
\begin{equation}\label{dcrvg}
b_sh(z)x:=\lim_{n \to \infty}bh(z)x_n,
\end{equation}
where $(x_n)$ is a sequence in $X_0$ such that $x_n \to x$. \ We note
that (\ref{dcrvg}) is independent of the particular choice of the
dense subset $X_0$ of $X$ and a sequence $(x_n)$ in $X_0$ :
indeed let $Y_0$ is another countable dense subset of
$X$ and $(y_n)$ is a sequence in $Y_0$ such that $y_n \to x$. \ By the
same argument above, we see that for almost all $z \in \mathbb T$,
$$
q^{\prime}(z):=\sup\left\{\frac{||bh(z)x||}{||x||}: 0 \neq x\in X_0
\cup Y_0\right\}<\infty.
$$
Thus
$$
||bh(z)x_n- bh(z)y_n|| \leq q^{\prime}(z)||x_n- y_n||
\to 0 \quad \hbox{as} \ n \to \infty,
$$
which implies that the function $b_sh(z)$ is well-defined on $X$ for
almost all $z \in \mathbb T$. \ (We call $b_sh$ the {\it strong
boundary function} of $h$.)

Now let $1\le p\le \infty$ and suppose  $h \in H^p(\mathbb D,
\mathcal B(X, Y))$. \ Then $b_sh(z) \in \mathcal B(X,Y)$
for almost all $z \in \mathbb T$ and it is easy to show that $b_sh$
is SOT measurable and hence, by Lemma \ref{sdsacdny}, $N(b_sh)$ is
measurable because $X$ is separable. \ We claim that
\begin{equation}\label{trytry}
b_sh\in H^p_{sot}(\mathbb T, \mathcal B(X, Y)).
\end{equation}
To see this, we first observe that, by
(\ref{xscvzxfdcsxazscd}), $N(b_sh)(z)=q(z)\leq \liminf_{r \to
1}N(h_r)(z)$ for almost all $z \in \mathbb T$. \ Thus for $1\leq p
<\infty$, it follows from Fatou's lemma that
\begin{equation}\label{sxcfvllllll}
\aligned \int_{\mathbb T}N(b_sh)(z)^pdm(z)&\leq \hbox{lim inf}_{r
\to
1} \int_{\mathbb T}N(h_r)(z)^pdm(z)\\
& \leq ||h||_{H^p(\mathbb D,\, \mathcal B(X, Y))}^p<\infty,
\endaligned
\end{equation}
Let $x \in X$ be arbitrary and $(x_n)$ be a sequence in $X_0$ such
that $x_n \to x$. \ Then it follows from (\ref{sxcfvllllll}) that
$$
\aligned ||b_sh(\cdot) x-bh(\cdot) x_n||_{L^p(\mathbb T, Y)}&=
\biggl(\int_{\mathbb T}||b_sh(z)(x-x_n)||^pdm(z)\biggr)^{\frac{1}{p}}\\
&\leq||h||_{H^p(\mathbb D,\, \mathcal B(X, Y))}||x-x_n|| \to 0 \quad
\hbox{as} \  n \to \infty.
\endaligned
$$
But since $H^p(\mathbb T,Y)$ is a closed subspace of $L^p(\mathbb
T,Y)$ and $bh(\cdot)x_n\in H^p(\mathbb T,Y)$, we have
$b_sh(\cdot)x\in H^p(\mathbb T,Y)$, which together with
(\ref{sxcfvllllll}) implies that $b_sh \in H^p_{sot}(\mathbb T,
\mathcal B(X, Y))$ and
$$
||b_sh||_{H^p_{sot}(\mathbb T, \mathcal B(X, Y))}\leq
||h||_{H^p(\mathbb D,\, \mathcal B(X, Y))}.
$$
If instead $p=\infty$, then  $h \in H^1(\mathbb D, \mathcal B(X,
Y))$, so that $b_sh \in H^1_{sot}(\mathbb T, \mathcal B(X, Y))$. \
Also, it follows from (\ref{xsdfdsadfdcsxazscd}) that
\begin{equation}\label{adcvfbghbkkgjjdchj}
||b_sh||_{L^{\infty}_{sot}(\mathbb T,\, \mathcal B(X, Y))}\leq
||h||_{H^{\infty}(\mathbb D,\, \mathcal B(X, Y))}.
\end{equation}
Thus $b_sh \in H^{\infty}_{sot}(\mathbb T, \mathcal B(X, Y))$. \
This proves (\ref{trytry}).

We next claim that
\begin{equation}\label{314003}
P_s[b_sh]=h.
\end{equation}
Let $x \in X$ be arbitrary. \ Then for each $\zeta=re^{i\theta} \in
\mathbb D$,
\begin{equation}\label{sdfghynhbgfvdcxs}
\aligned ||P_s[b_sh](\zeta)x||&\leq \frac{1+r}{1-r}\int_{\mathbb
T}||b_sh(z)x||dm(z)\\
&= \frac{1+r}{1-r}\cdot ||b_sh||_{ L^1_{sot}(\mathbb T, \ \mathcal
B(X, Y))} \cdot ||x||,
\endaligned
\end{equation}
 Choose a sequence $(x_n)$ in $X_0$ such
that $x_n \to x$. \ Then for each $\zeta \in \mathbb D$,
$$
h(\zeta)x=\lim_{n\to\infty} h(\zeta)x_n =\lim_{n\to\infty}
P_s[b_sh](\zeta)x_n =P_s[b_sh](\zeta)x,
$$
where the last equality follows from (\ref{sdfghynhbgfvdcxs}). \
This proves (\ref{314003}). \ Thus the mapping $f \mapsto P_s[f]$ is
a surjection from $H^p_{sot}(\mathbb T, \mathcal B(X, Y))$ to
$H^p(\mathbb D, \mathcal B(X, Y))$. \ Therefore, by Lemma
\ref{scxwesrftvsryhik}, (\ref{sxcfvllllll}) and
(\ref{adcvfbghbkkgjjdchj}), the mapping $f \mapsto P_s[f]$ is an
isometry from $H^p_{sot}(\mathbb T, \mathcal B(X, Y))$ onto
$H^p(\mathbb D, \mathcal B(X, Y))$. \ This completes the proof.
\end{proof}

\medskip

Theorem \ref{mainthm} may  fail if
 the separability condition on $X$
is dropped. \ For $z \in \mathbb T$ and $x \in \ell^2(\mathbb T)$,
let $f:\mathbb T\to \mathcal B(\ell^2(\mathbb T))$ be defined by
$$
(f(z)x)(s):=\begin{cases} x(z), \quad \  \hbox{if} \ s=z \\
0,  \qquad  \  \ \hbox{if} \  s\neq z.
\end{cases}
$$
Then by the argument in Remark \ref{35111}, we have $N(f)=1$, and
hence $f \in H^{p}_{sot}(\mathbb T, \mathcal B(\ell^2(\mathbb T)))$
with $||f||_{H^{p}_{sot}(\mathbb T, \mathcal B(\ell^2(\mathbb
T)))}=1$ for all $1 \leq p \leq \infty$. \ Since $(f(z)x)(s)$ is
zero for all $z \ne s$, it follows that for each $x \in
\ell^2(\mathbb T)$, $\zeta \in \mathbb D$ and $s \in \mathbb T$,
$$
\aligned \bigl(P_s[f](\zeta)x\bigr)(s)
=\int_{\mathbb T}P_{\zeta}(z)(f(z)x)(s)dm(z) =0,
\endaligned
$$
which implies that $P_s[f]=0$ in $H^{p}(\mathbb D, \mathcal
B(\ell^2(\mathbb T)))$. \ Therefore, the mapping $f \mapsto P_s[f]$ is
{\it not} an isometry.

\bigskip

We conclude with consideration on adjoints of functions in
$H^p_{sot}(\mathbb T, \mathcal B(X,Y))$.

\medskip

For a function $f:\mathbb T\to \mathcal B(X,Y)$, define
the ``adjoint" $f^*:\mathbb T\to \mathcal B(Y^*,X^*)$ of $f$ by
$$
f^*(z):=f(z)^*\quad (z\in\mathbb T).
$$
We may ask the following question: for $1\le p\le
\infty$, does it follow that
$$
f\in H^p_{sot}(\mathbb T, \mathcal B(X,Y))\ \Longrightarrow\ f^*\in
H^p_{sot}(\mathbb T, \mathcal B(Y^*,X^*))\,?
$$
In the sequel, we give an affirmative answer to this question if $X$
is  reflexive. \ To begin with we review some definitions.

A function $f:\mathbb T \rightarrow X$ is called {\it weakly
integrable} if $\langle f,  x^*\rangle \in L^1(\mathbb T)$ for every
$x^* \in X^*$. \ If $f$ is weakly  integrable then the function
$T_f:X^* \to L^1(\mathbb T)$, defined by $T_f x^*:=\langle f,
x^*\rangle$, is a bounded linear operator. \ A weakly  integrable
function $f:\mathbb T\to X$ is called {\it Pettis integrable} if the
adjoint $T_f^*$ of the operator $T_f$ maps $L^{\infty}(\mathbb T)$
into $X$. \ It is well known that
$$
f\ \hbox{is Bochner integrable} \Longrightarrow f\ \hbox{is Pettis
integrable} \Longrightarrow f\ \hbox{is weakly integrable.}
$$
Also it is known (cf. \cite[Proposition 1.2.36.]{TJML}) that for a
weakly integrable function $f: \mathbb T \rightarrow X$,
the following are equivalent:
\begin{itemize}
\item[(a)] $f$ is Pettis  integrable;
\item[(b)] for each
measurable set $B$ in $\mathbb T$, there exists an element $x_B \in
X$ such that for every $x^*\in X^*$ we have $ \langle x_B,
x^*\rangle=\int_{B} \langle f(z), x^* \rangle dm(z)$.
\end{itemize}
In this case, we shall write
$$
x_B =: (p)\hskip -.1cm-\hskip -.2cm\int_B f(z)dm(z),
$$
and call it the {\it Pettis integral} of $f$ over $B$.

We then have:

\begin{lem}\label{gbfyjnlhdsas}  Let $X$ be a reflexive Banach space, and let $f
\in L^1_{sot}(\mathbb T, \mathcal B(X, Y))$. \ Then for each $y^* \in Y^*$
and $\zeta \in \mathbb D$, we have
$$
P_s[f]^*(\zeta)y^*=(p)\hskip -.1cm-\hskip -.2cm \int_{\mathbb
T}P_{\zeta}(z)f^*(z)y^*dm(z),
$$
where $P_s[f]^*(\zeta):=P_s[f](\zeta)^*$.
\end{lem}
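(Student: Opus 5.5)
We need to show that for each $y^*\in Y^*$ and $\zeta\in\mathbb D$, the function $g(z):=P_\zeta(z)f^*(z)y^*$ from $\mathbb T$ to $X^*$ is Pettis integrable, with Pettis integral $P_s[f](\zeta)^*y^*$. Fix $y^*$ and $\zeta$. The plan is to check weak integrability first and then verify criterion (b) of the quoted characterization \cite[Proposition 1.2.36]{TJML}, identifying the element $x_B$ explicitly. Reflexivity of $X$ enters because the dual of $X^*$ is $X$, so testing functionals on $X^*$ are just evaluations at $x\in X$.

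\emph{Weak integrability.} For $x\in X=(X^*)^*$ we have
$$\langle g(z),x\rangle=P_\zeta(z)\langle x, f(z)^*y^*\rangle=P_\zeta(z)\,y^*\bigl(f(z)x\bigr).$$
Since $f$ is SOT measurable, $z\mapsto f(z)x$ is strongly measurable, so $z\mapsto y^*(f(z)x)$ is measurable. Moreover
$$|y^*(f(z)x)|\le \|y^*\|\,N(f)(z)\,\|x\|,$$
and $N(f)\in L^1(\mathbb T)$ while $P_\zeta$ is bounded on $\mathbb T$ (by $\frac{1+|\zeta|}{1-|\zeta|}$). Hence $\langle g,x\rangle\in L^1(\mathbb T)$, so $g$ is weakly integrable.

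\emph{Pettis integrability.} Let $B\subseteq\mathbb T$ be measurable. We define $x^*_B\in X^*$ by
$$x^*_B(x):=\Bigl\langle \int_B P_\zeta(z)f(z)x\,dm(z),\,y^*\Bigr\rangle,$$
where the inner integral is a Bochner integral in $Y$. It exists because $z\mapsto f(z)x$ is strongly measurable with norm at most $N(f)(z)\|x\|$, which is integrable. The functional $x^*_B$ is clearly linear in $x$, and it is bounded by $\frac{1+|\zeta|}{1-|\zeta|}\|f\|_{L^1_{sot}}\|y^*\|\,\|x\|$. Since a bounded functional commutes with the Bochner integral,
$$x^*_B(x)=\int_B P_\zeta(z)\,y^*(f(z)x)\,dm(z)=\int_B\langle g(z),x\rangle\,dm(z).$$
Because every element of $(X^*)^*$ is of the form $x$ by reflexivity, this is exactly condition (b). So $g$ is Pettis integrable with $(p)\hskip -.1cm-\hskip -.2cm\int_B g\,dm=x^*_B$.

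\emph{Identification.} Take $B=\mathbb T$. By Definition~\ref{dfecfregvryhb}, $\int_{\mathbb T}P_\zeta(z)f(z)x\,dm(z)=P_s[f](\zeta)x$, so
$$x^*_{\mathbb T}(x)=\langle P_s[f](\zeta)x,y^*\rangle=\langle x,P_s[f](\zeta)^*y^*\rangle\quad\text{for all }x\in X.$$
Hence $x^*_{\mathbb T}=P_s[f]^*(\zeta)y^*$, which proves the formula.

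The only delicate point is condition (b): it requires testing against \emph{all} of $(X^*)^*$, not merely against $X$. Without reflexivity we would only know that $\langle g,\cdot\rangle$ is integrable against elements of $X$. Reflexivity removes this issue, and everything else is routine: measurability comes from SOT measurability, and the integrability bounds come from $N(f)\in L^1(\mathbb T)$ together with the boundedness of $P_\zeta$.
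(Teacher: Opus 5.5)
Your proof is correct and follows the paper's argument. Both use reflexivity to reduce weak measurability and weak integrability of $z\mapsto P_\zeta(z)f^*(z)y^*$ to the scalar functions $P_\zeta(z)\,y^*(f(z)x)$, bounded using $\frac{1+|\zeta|}{1-|\zeta|}\|y^*\|\|x\|\|f\|_{L^1_{sot}}$, and both identify the integral with $P_s[f](\zeta)^*y^*$ by pairing against $x\in X$; the one refinement is that you verify criterion (b) explicitly, building $x^*_B$ from the Bochner integral $\int_B P_\zeta(z)f(z)x\,dm(z)$, whereas the paper just asserts ``weakly integrable and hence Pettis integrable'' (tacitly using that $X^*$ is reflexive).
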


\begin{proof} Let $X$ be reflexive and $f
\in L^1_{sot}(\mathbb T, \mathcal B(X, Y))$. \ Then for all $x \in
X$ and $y^* \in Y^*$, the mapping $z \mapsto
\bigl\langle x, f^*(z)y^* \bigr \rangle=\bigl\langle f(z)x, y^*
\bigr \rangle $ is measurable. \ But since $X$ is reflexive,
the mapping $z \mapsto f^*(z)y^*$ is weakly measurable.
\ Thus the mapping $z \mapsto P_{\zeta}(z)f^*(z)y^*$ is
weakly measurable for each $\zeta=re^{i\theta} \in \mathbb D$. \ For
each $x \in X$,
$$
\aligned \int_{\mathbb T}\bigl|\langle x,
P_{\zeta}(z)f^*(z)y^*\rangle \bigr|dm(z)&=\int_{\mathbb
T}P_{\zeta}(z)\bigl| \langle f(z)x, y^* \bigr\rangle \bigr|
dm(z)\\
&\leq \frac{1+r}{1-r} \cdot ||y^*||\cdot||x||
\cdot||f||_{L^1_{sot}(\mathbb T,\, \mathcal B(X, Y))}\\
&<\infty,
\endaligned
$$
which implies that $P_{\zeta}(\cdot)f^*(\cdot)y^*$ is weakly
integrable and hence Pettis integrable. \ Thus for all $x \in X$ and
$\zeta \in \mathbb D$,
$$ \aligned \bigl \langle
P_s[f](\zeta)x, y^* \bigr \rangle &=\int_{\mathbb T} \bigl \langle
x, P_{\zeta}(z)f^*(z)y^* \bigr
\rangle dm(z)\\
& =\Bigl \langle x,  (p)\hskip -.1cm-\hskip -.2cm \int_{\mathbb
T}P_{\zeta}(z) f^*(z)y^* dm(z)\Bigr \rangle,
\endaligned
$$
which gives the result.
\end{proof}

\medskip

We now have:

\begin{thm}\label{gbfyjnlhdsas42}
Let $X$ be a reflexive Banach space and $1\le p\le \infty$. \ If $f \in
H^p_{sot}(\mathbb T, \mathcal B(X, Y))$, then $f^* \in
H^p_{sot}(\mathbb T, \mathcal B(Y^*, X^*))$. \ Moreover,
$P_s[f^*]=P_s[f]^*$.
\end{thm}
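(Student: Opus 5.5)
We need to prove three things for $f\in H^p_{sot}(\mathbb T,\mathcal B(X,Y))$ with $X$ reflexive: (i) $f^*$ is SOT measurable as a map into $\mathcal B(Y^*,X^*)$; (ii) $N(f^*)\in L^p(\mathbb T)$ and each $f^*(\cdot)y^*$ lies in $H^p(\mathbb T,X^*)$; (iii) $P_s[f^*]=P_s[f]^*$. The norm part is immediate: $\|f(z)^*\|=\|f(z)\|$, so $N(f^*)=N(f)$, which is measurable by the definition of $L^p_{sot}$. Hence $\|f^*\|_{L^p_{sot}}=\|f\|_{L^p_{sot}}$.

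For SOT measurability, I fix $y^*\in Y^*$ and use the Pettis Measurability Theorem for $g(z):=f^*(z)y^*\in X^*$. Weak measurability: since $X$ is reflexive, $X^{**}=X$, so it suffices that $z\mapsto\langle x,f^*(z)y^*\rangle=\langle f(z)x,y^*\rangle$ is measurable for each $x\in X$, which holds because $f(\cdot)x$ is strongly measurable. Essential separable valuedness is the main obstacle, because $X^*$ need not be separable a priori. I would handle it as follows. In the setting where the statement is meaningful (the $P_s$ construction needs $f\in H^1_{sot}$), I plan to reduce to $X$ separable: if $X$ is reflexive and separable, then $X^*$ is separable, and the issue disappears. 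If separability of $X$ is not available, I would instead try to show that $g$ takes values a.e.\ in a separable subspace by approximating. Concretely, for $z$ in a full-measure set, use the analytic structure: the Fourier coefficients $\widehat{g}(n)$, defined as Pettis integrals via Lemma \ref{gbfyjnlhdsas}-type arguments, form a countable set, and one would show $g(z)$ lies in their closed span. This is the delicate point, and I would first check whether the intended proof simply invokes separability of $X^*$. Once $g$ is strongly measurable with $\|g(z)\|\le N(f)(z)\|y^*\|$, we get $g\in L^p(\mathbb T,X^*)$.

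For analyticity, the Bochner integral $\widehat g(n)$ satisfies
$$\langle x,\widehat g(n)\rangle=\int_{\mathbb T}\overline z^{\,n}\langle f(z)x,y^*\rangle\,dm(z)=\bigl\langle \widehat{f(\cdot)x}(n),y^*\bigr\rangle=0\quad(n<0),$$
since $f(\cdot)x\in H^p(\mathbb T,Y)$. Reflexivity gives $\widehat g(n)=0$ for $n<0$, so $g\in H^p(\mathbb T,X^*)$ and $f^*\in H^p_{sot}$.

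Finally, $P_s[f^*](\zeta)y^*=P[g](\zeta)$ is a Bochner integral, which coincides with the Pettis integral $(p)\text{-}\int P_\zeta f^*y^*\,dm$. By Lemma \ref{gbfyjnlhdsas}, this equals $P_s[f]^*(\zeta)y^*$, giving $P_s[f^*]=P_s[f]^*$.
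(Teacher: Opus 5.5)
\textbf{The gap.} It is exactly the step you flagged: essential separable valuedness of $g(z)=f^*(z)y^*$ when $X$ is not separable. Neither of your two plans closes it.

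The reduction to separable $X$ has no basis. The statement assumes only reflexivity, and $P_s[f]$ is defined for every $f\in H^1_{sot}(\mathbb T,\mathcal B(X,Y))$ without any separability.

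The fallback cannot work either, because the statement is false as written without separability. Take $X=\ell^2(\mathbb T)$ as in Remark~\ref{35111} (a Hilbert space, hence reflexive), $Y=\mathbb C$, and $f(z)x:=x(z)$.
For each $x$, $f(\cdot)x$ vanishes off the countable support of $x$, so it is $0$ a.e. Hence $f$ is SOT measurable, $f(\cdot)x\in H^p(\mathbb T)$, and $N(f)\equiv 1$; that is, $f\in H^p_{sot}(\mathbb T,\mathcal B(X,\mathbb C))$ for every $p$.
Now take $y^*=1\in\mathbb C^*=\mathbb C$. Then $f^*(z)y^*=\delta_z$, the evaluation at $z$, and $\|\delta_z-\delta_w\|=\sqrt 2$ for $z\neq w$. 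So $z\mapsto f^*(z)y^*$ is not essentially separably valued, hence not strongly measurable, and $f^*\notin H^p_{sot}(\mathbb T,\mathcal B(\mathbb C,X^*))$.
In this example every Pettis Fourier coefficient of $g$ vanishes while $g(z)\neq 0$ for all $z$. This is precisely why $g(z)$ need not lie in the closed span of those coefficients.

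\textbf{The paper's proof and your separable case.} The paper's proof has the same hole. It identifies $f^*(\cdot)y^*$ with the radial limit of $P_s[f]^*(\cdot)y^*\in H^1(\mathbb D,X^*)$, which exists by the ARNP of $X^*$. But (\ref{asdfghgfdsa}) holds off a null set that depends on $x$. Passing to a.e.\ equality in $X^*$ requires a countable subset of $X$ separating the points of $X^*$, which is equivalent to separability of $X$. In the example $P_s[f]=0$, so the radial limit is $0\neq\delta_z$.

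If you add the hypothesis that $X$ is separable (then $X^*$ is separable, since $X=X^{**}$), your proof is complete and correct. It is also a genuinely more elementary route than the paper's:
\begin{itemize}
\item Pettis's theorem gives strong measurability of $g$ directly.
\item The vanishing of $\widehat g(n)$ for $n<0$ gives $g\in H^p(\mathbb T,X^*)$. This step needs no reflexivity, since $\langle x,\widehat g(n)\rangle=0$ for all $x\in X$ already means $\widehat g(n)=0$.
\item You never need the holomorphy of $P_s[f]^*$, its $H^1$ bound, or the ARNP of $X^*$.
\end{itemize}
The identity $P_s[f^*]=P_s[f]^*$ then follows from Lemma~\ref{gbfyjnlhdsas} exactly as you say.
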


\begin{proof} Let $X$ be reflexive, $1\leq p\leq \infty$,
and $f \in H^p_{sot}(\mathbb T, \mathcal B(X, Y))$. \ Since $\bigl
\langle x, P_s[f]^*(\zeta)y^* \bigr \rangle=\bigl \langle
P_s[f](\zeta)x, y^* \bigr \rangle$ for all $x \in X$ and $y^* \in
Y^*$, it follows from Lemma \ref{scxwesrftvsryhik} that $P_s[f]^*\in
\hbox{Hol}\, (\mathbb D, \mathcal B(Y^*, X^*))$. \  For all
$y^*\in Y^*$ and $\zeta \in \mathbb D$,
$$
\aligned ||P_s[f]^*(\zeta)y^*||&=\sup_{||x||=1}\bigl| \bigl \langle
x, P_s[f]^*(\zeta)y^*\bigr \rangle \bigr|\\
&\leq \int_{\mathbb T}P_{\zeta}(z) ||f(z)|| dm(z)\cdot||y^*||\\
&=P[N(f)](\zeta)\cdot||y^*||,
\endaligned
$$
which implies that $||P_s[f]^*(\zeta)||\leq
P[N(f)](\zeta)$. \ It thus follows from Lemma
\ref{sxacdfghnm}(a) that
$$ \int_{\mathbb
T}||\bigl(P_s[f]^*\bigr)_r(z)||dm(z) \leq \int_{\mathbb
T}(P[N(f)])_r(z)dm(z)\leq ||f||_{L^1_{sot}(\mathbb T,\, \mathcal
B(X, Y))}.
$$
This proves that $P_s[f]^*\in H^1(\mathbb D, \mathcal
B(Y^*, X^*))$. \ On the other hand, for all $x \in X$ and $y^* \in
Y^*$, we have that for almost all $z \in \mathbb T$,
\begin{equation}\label{asdfghgfdsa}
\aligned \lim_{r \to 1}\bigl\langle x,  P_s[f]^*(rz)y^* \bigr
\rangle&=\lim_{r \to 1}\langle P_s[f](rz)x,  y^*\rangle\\
&=\lim_{r\to 1}\bigl(P[\langle f(\cdot)x,  y^*\rangle ]\bigr)_r(z)\\
&=\langle x, f^*(z)y^* \rangle,
\endaligned
\end{equation}
where the last equality follows from the fact that $\langle
f(\cdot)x, y^* \rangle \in L^p(\mathbb T)$. \ Since $X^*$ has the
ARNP and $P_s[f]^*(\cdot)y^* \in H^1(\mathbb D, X^*)$, it follows
that
$$
bP_s[f]^*(z)y^*:=\lim_{r \to 1}P_s[f]^*(rz)y^*
$$
exists for almost all $z \in \mathbb T$. \
Since $X$ is reflexive, by the Hahn-Banach Theorem and
(\ref{asdfghgfdsa}), $f^*(\cdot)y^*=bP_s[f]^*(\cdot)y^* \in
H^1(\mathbb T, X^*)$. \ In particular, $f^*$ is SOT measurable, and
hence $f^* \in H^p_{sot}(\mathbb T, \mathcal B(Y^*, X^*))$. \ On the
other hand, since $f \in H^1_{sot}(\mathbb T, \mathcal
B(X,Y))$, it follows from Lemma \ref{gbfyjnlhdsas} that for each
$y^* \in Y^*$ and $\zeta \in \mathbb D$,
$$
P_s[f]^*(\zeta)y^*=(p)\hskip -.1cm-\hskip -.2cm\int_{\mathbb
T}P_{\zeta}(z)f^*(z)y^*dm(z)=P_s[f^*](\zeta)y^*,
$$
which implies $P_s[f^*]=P_s[f]^*$. \ This completes the proof.
\end{proof}

\medskip

Theorem \ref{gbfyjnlhdsas42} may fail if the reflexive condition on
$X$ is dropped. \ To see this, let $f:\mathbb T\to \mathcal B(\ell^1)$
be defined by
$$
(f(z)x)(n):=z^nx(n) \quad(x \equiv (x(n)) \in \ell^1).
$$
Then it is not difficult to show that  $f\in
H^{\infty}_{sot}(\mathbb T, \mathcal B(\ell^1))$ and $f^*$ is not
SOT measurable (cf. Example \ref{ex23}), so that $f^*
\notin H^{\infty}_{sot}(\mathbb T, \mathcal B(\ell^{\infty}))$. \
Note that $\ell^1$ is not reflexive.

\bigskip

%
%
%
%

\section{Appendix: Strong $H^p$-spaces}

\medskip

We devote this section to a general discussion of the circle companions of strong $H^p$-spaces.

For $1\leq p\leq \infty$, let $H^p_{s}(\mathbb D, \mathcal B(X, Y))$
be the space of all functions $h$ in $\hbox{Hol}(\mathbb D, \mathcal
B(X, Y))$ such that $h(\cdot)x \in H^p(\mathbb D, Y)$ for every $x
\in X$: $H^p_{s}(\mathbb D, \mathcal B(X, Y))$ is called a {\it
strong $H^p$-space} (cf. \cite{Ni2}). \ If $h \in H^p_{s}(\mathbb D,
\mathcal B(X, Y))$, then we can easily show that the mapping $x
\mapsto h(\cdot)x$ is a closed linear transformation from $X$ into
$H^p(\mathbb D, Y)$,
so that by the Closed Graph Theorem, it is
bounded. \ Let
$$
||h||_{H^p_s(\mathbb D,\, \mathcal B(X,
Y))}:=\sup\bigl\{||h(\cdot)x||_{H^p(\mathbb D, Y)}: x \in X \
\hbox{with} \ ||x||\leq 1 \bigr\}.
$$
Then $H^p_s(\mathbb D, \mathcal B(X,Y))$ is a normed space and
\begin{equation}\label{301}
H^p(\mathbb D, \mathcal B(X,Y))\subseteq H^p_s(\mathbb D, \mathcal
B(X,Y))\quad (1\leq p\leq \infty).
\end{equation}
Also we can easily check that $H^{\infty}_{s}(\mathbb D, \mathcal
B(X, Y))=H^{\infty}(\mathbb D, \mathcal B(X, Y))$. \ However, if
$1\le p<\infty$ then the inclusion in (\ref{301}) may be proper.

\begin{ex} Let $1\le p<\infty$. \ For  $\zeta \in \mathbb D$,
define $h(\zeta):H^p(\mathbb T) \to \mathbb C$ by
$$
h(\zeta)f:=P[f](\zeta) \quad(f \in H^p(\mathbb T)).
$$
Then for each $\zeta=r e^{i \theta} \in \mathbb D$,
$$
\aligned ||h(\zeta)||_{\mathcal B(H^p(\mathbb T), \mathbb C)}&=
\sup\bigl\{||P[f](re^{i \theta})||: ||f||_{H^p(\mathbb T)}=1\bigr\}\\
&\leq \frac{1+r}{1-r} \cdot\sup\bigl\{||f||_{H^1(\mathbb T)}: ||f||_{H^p(\mathbb T)}=1\bigr\}\\
&\leq \frac{1+r}{1-r},
\endaligned
$$
which implies that $h(\zeta)$ is a bounded linear operator. \ Thus it
is easy to show that $h \in H^p_s(\mathbb D, \mathcal B(H^p(\mathbb
T), \mathbb C))$ and $||h||_{H^p_s(\mathbb D,\,  \mathcal
B(H^p(\mathbb T), \mathbb C))}=1$. \ However, $h \notin H^p(\mathbb D,
\mathcal B(H^p(\mathbb T), \mathbb C))$: indeed, for
each $z \in \mathbb T$, let
$$
f_{z}(s):=e^{\frac{s+z}{s-z}} \quad (s \in \mathbb T).
$$
Then $f_{z}$ is inner, so that $||f_{z}||_{H^p(\mathbb T)}=1$. \ Thus
$$
||h_r(z)||\geq |h_r(z)f_{z}|=e^{\frac{r+1}{r-1}},
$$
so that
$$
\sup_{0\leq r<1}\left(\int_{\mathbb
T}||h_r(z)||^pdm(z)\right)^{\frac{1}{p}}\geq \sup_{0\leq
r<1}e^{\frac{r+1}{r-1}}=\infty,
$$
which implies that $h \notin H^p(\mathbb D, \mathcal
B(H^p(\mathbb T), \mathbb C))$. \qed
\end{ex}

\medskip

Let $\mathcal L(\mathcal X, \mathcal Y)$ be the set of all linear
transformations between normed spaces $\mathcal X$ and $\mathcal Y$.
For a subset $F$ of a Banach space $X$, let $\hbox{sp}(F)$ denote
the linear span of $F$. \ For $1\leq p \leq \infty$, let
$L^p_{s}(\mathbb T, \mathcal L(\hbox{sp}(F), Y))$ be the space of
all (equivalence classes of) functions $f: \mathbb T \rightarrow
\mathcal L(\hbox{sp}(F), Y)$ satisfying
\begin{itemize}
\item[(i)] $f(\cdot)x \in L^p(\mathbb T, Y)$ for
all $x \in \hbox{sp}(F)$; as usual, we identify $f$ and $g$ when
$f(\cdot)x=g(\cdot)x$ in $L^p(\mathbb T, Y)$ for all $x \in
\hbox{sp}(F)$;
\item[(ii)] $||f||_{L^p_{s}(\mathbb T,\, \mathcal L(\hbox{sp}(F),
Y))}:=\sup\bigl\{||f(\cdot)x||_{L^p(\mathbb T, Y)}: x\in
\hbox{sp}(F) \ \hbox{with}\ ||x||\leq1 \bigr\}<\infty$.
\end{itemize}
Then $L^p_{s}(\mathbb T, \mathcal L(\hbox{sp}(F), Y))$ is a normed
space and
$$
L^q_{s}(\mathbb T, \mathcal L(\hbox{sp}(F), Y))\subseteq
L^p_{s}(\mathbb T, \mathcal L(\hbox{sp}(F), Y)) \quad \hbox{if} \
1\leq p \leq q \leq \infty.
$$
We now define $H^p_{s}(\mathbb T, \mathcal L(\hbox{sp}(F), Y))$ as
the space of all (equivalence classes of) functions $f\in
L^p_{s}(\mathbb T, \mathcal L(\hbox{sp}(F), Y))$ such that
$f(\cdot)x\in H^p(\mathbb T,Y)$ for all $x\in\hbox{sp}(F)$.
We note that Definition \ref{dfecfregvryhb} is still well-defined
for functions in $H^1_{s}(\mathbb T, \mathcal L(\hbox{sp}(F), Y))$;
i.e., for $f\in H^1_{s}(\mathbb T, \mathcal L(\hbox{sp}(F), Y))$ and
$x \in \hbox{sp}(F)$,
$$
P_s[f](\zeta)x:=P[f(\cdot)x](\zeta) \quad (\zeta \in \mathbb D).
$$

 We then have:

\begin{lem}\label{sxcvbnbvcv}
Let $X,Y$ be Banach spaces and $F\subseteq X$. \ Suppose
$f \in H^1_{s}(\mathbb T, \mathcal L(\hbox{sp}(F), Y))$.
If $(x_n)$ is a Cauchy sequence in $\hbox{sp}(F)$, then the sequence
$(P_s[f](\cdot)x_n)$ converges uniformly  on every compact subset of
$\mathbb D$.
\end{lem}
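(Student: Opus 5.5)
The plan is to obtain a pointwise Poisson-kernel bound on the differences and then apply a uniform Cauchy criterion on compacta. By linearity of $x \mapsto f(\cdot)x$ on $\hbox{sp}(F)$ and of the Poisson integral, we have
$$
P_s[f](\zeta)x_n - P_s[f](\zeta)x_m = P[f(\cdot)(x_n-x_m)](\zeta).
$$
This holds because $f(z)$ is linear on $\hbox{sp}(F)$ for each $z$, so $f(\cdot)x_n - f(\cdot)x_m = f(\cdot)(x_n-x_m)$ as elements of $H^1(\mathbb T,Y)$.

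For $\zeta = re^{i\theta}$, use the bound $0 \le P_\zeta(z) \le \frac{1+r}{1-r}$ together with the definition of the norm in (ii) for $p=1$:
$$
\bigl\|P_s[f](\zeta)x_n - P_s[f](\zeta)x_m\bigr\|_Y \le \frac{1+r}{1-r}\int_{\mathbb T}\|f(z)(x_n-x_m)\|\,dm(z) \le \frac{1+|\zeta|}{1-|\zeta|}\,\|f\|_{L^1_s(\mathbb T,\,\mathcal L(\hbox{sp}(F),Y))}\,\|x_n-x_m\|.
$$
The finiteness of $\|f\|_{L^1_s}$ is exactly hypothesis (ii), and homogeneity gives the factor $\|x_n-x_m\|$; when $x_n-x_m=0$ the bound is trivial.

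Now fix a compact $K \subseteq \mathbb D$ and pick $\rho<1$ with $K \subseteq \{|\zeta|\le\rho\}$. Then
$$
\sup_{\zeta\in K}\bigl\|P_s[f](\zeta)x_n - P_s[f](\zeta)x_m\bigr\| \le \frac{1+\rho}{1-\rho}\,\|f\|_{L^1_s}\,\|x_n-x_m\| \to 0.
$$
So the sequence is uniformly Cauchy on $K$ in the Banach space $Y$. Completeness of $Y$ gives a pointwise limit, and the uniform Cauchy estimate upgrades this to uniform convergence on $K$.

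There is no real obstacle. The only point needing care is the linearity of $f(z)$ on $\hbox{sp}(F)$, which is valid modulo the equivalence relation in (i) since we work with $f(\cdot)x$ as $L^1$ classes. The Poisson integral depends only on that class, so the estimate is well defined.
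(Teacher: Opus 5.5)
Your proof is correct and follows essentially the same route as the paper: the Poisson kernel bound $P_\zeta(z)\le \frac{1+r}{1-r}$, combined with the $L^1_s$ norm, gives a uniform Cauchy estimate on $\{|\zeta|\le\rho\}\supseteq K$, and completeness of $Y$ then yields the uniform limit. Your explicit choice of $\rho$, together with the monotonicity of $t\mapsto\frac{1+t}{1-t}$, makes the uniformity over $K$ clearer than the paper's phrasing with $r=\max_{\zeta\in K}|\zeta|$.
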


\begin{proof}
Suppose $f \in H^1_{s}(\mathbb T, \mathcal L(\hbox{sp}(F), Y))$
and $K$ is a compact subset of $\mathbb D$. \ Then $r\equiv
\max\bigl\{|\zeta|: \zeta \in K \bigr\}<1$. \ Let
$(x_n)$ be a Cauchy sequence in $\hbox{sp}(F)$ and
$\epsilon>0$ be arbitrary. \ For all $\zeta=re^{i \theta} \in K$,
there exists $N>0$ such that if $m>n>N$, then
\begin{equation}\label{sdfagsgsall}
\aligned ||P_s[f](\zeta)x_n-P_s[f](\zeta)x_m||
&=\biggl|\biggl|\int_{\mathbb T}P_{\zeta}(z)f(z)(x_n-x_m)dm(z) \biggr|\biggr|\\
&\leq \frac{1+r}{1-r}\cdot\int_{\mathbb T}||f(z)(x_n-x_m)||dm(z)\\
&\leq \frac{1+r}{1-r} \cdot||f||_{L^1_s(\mathbb T,\,  \mathcal
L(\hbox{sp}(F), Y))}||x_n-x_m||<\frac{\epsilon}{2}.
\endaligned
\end{equation}
Thus $(P_s[f](\cdot)x_n)$ converges pointwise to a function
$h:\mathbb D\to Y$. \ Fixing $n>N$ and letting $m\to \infty$,
(\ref{sdfagsgsall}) leads to
$$
||P_s[f](\zeta)x_n-h(\zeta)||=\lim_{m \to
\infty}||P_s[f](\zeta)x_n-P_s[f](\zeta)x_m||<\epsilon \quad
\hbox{for all} \ \zeta \in K,
$$
which implies $(P_s[f](\cdot)x_n)$ converges uniformly on $K$.
\end{proof}

\medskip

Now if $X$ is separable Banach space, we may define $P_s[f](\zeta)$
on $X$ for all $\zeta \in \mathbb D$ by virtue of Lemma
\ref{sxcvbnbvcv}. \ This is a reason why we introduce $\hbox{sp}(F)$.
Indeed, let $X,Y$ be Banach spaces and assume that $X$ is separable
and $F$ is a dense subset of $X$. \ Then by Lemma \ref{sxcvbnbvcv},
given a function $f \in H^1_{s}(\mathbb T, \mathcal L(\hbox{sp}(F),
Y))$, we may define an extension $\overline P_s[f](\zeta)$ of
$P_s[f](\zeta)$ to $X$ for each $\zeta \in \mathbb D$: in other
words, if $x \in X$, then there exists a sequence $(x_n)$ in
$\hbox{sp}(F)$ such that $x_n \to x$, so that by Lemma
\ref{sxcvbnbvcv}, $(P_s[f](\zeta)x_n)$ is a convergent sequence for
each $\zeta \in \mathbb D$ and hence,
 we can define, for
each $x\in X$,
\begin{equation}\label{3201}
\overline P_s[f](\zeta)x:=\lim_{n \to \infty}P_s[f](\zeta)x_n
\quad(\zeta \in \mathbb D).
\end{equation}
We note that the limit in (\ref{3201}) is independent
of the particular choice of $(x_n)$ because if $(y_n)$ is another
sequence in $\hbox{sp}(F)$ such that $y_n \to x$, then by the same
argument as in (\ref{sdfagsgsall}) we have, for all $\zeta \in
\mathbb D$,
$$
||P_s[f](\zeta)x_n-P_s[f](\zeta)y_n|| \to 0 \quad \hbox{as} \ n \to
\infty,
$$
which implies that the function $\overline P_s[f](\zeta)$ is
well-defined on $X$. \ For simplicity, and since doing
so will not lead to confusion, we will keep denoting by $P_s[f]$ the
extension $\overline P_s[f]$ defined by (\ref{3201}).

\medskip

We then have:

\begin{thm}\label{qqscxwesrftvsryhik}
Let $X,Y$ be Banach spaces and $F$ be a dense subset of
$X$. \ Then the mapping $f \mapsto P_s[f]$ is an isometry from
$H^p_s(\mathbb T, \mathcal L(\hbox{sp}(F), Y))$ to $H^p_s(\mathbb D,
\mathcal B(X, Y))$ for each $1\leq p\leq \infty$.
\end{thm}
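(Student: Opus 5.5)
Fix $1\le p\le\infty$ and $f\in H^p_s(\mathbb T,\mathcal L(\hbox{sp}(F),Y))$, and write $M:=\|f\|_{L^p_s(\mathbb T,\,\mathcal L(\hbox{sp}(F),Y))}$. The plan has three parts: show $P_s[f]$ (extended by (\ref{3201})) is a holomorphic $\mathcal B(X,Y)$-valued function; show $\|P_s[f]\|_{H^p_s}\le M$; then prove the reverse inequality using the vector-valued isometry $P:H^p(\mathbb T,Y)\to H^p(\mathbb D,Y)$ on each vector $x\in\hbox{sp}(F)$. Linearity of $f\mapsto P_s[f]$ is clear from linearity of the Poisson integral and of the limit in (\ref{3201}).

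\emph{Step 1 (operator-valued holomorphy).} For $x\in\hbox{sp}(F)$ and $\zeta=re^{i\theta}$, the estimate in (\ref{sdfagsgsall}) gives
\[
\|P_s[f](\zeta)x\|\le \tfrac{1+r}{1-r}\,\|f(\cdot)x\|_{L^1(\mathbb T,Y)}\le \tfrac{1+r}{1-r}\,M\,\|x\|,
\]
using $\|\cdot\|_{L^1}\le\|\cdot\|_{L^p}$ on the probability space $\mathbb T$. Hence $P_s[f](\zeta)$ is a bounded linear map on the dense subspace $\hbox{sp}(F)$, and its extension (\ref{3201}) lies in $\mathcal B(X,Y)$ with the same bound. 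For $x\in\hbox{sp}(F)$, $P_s[f](\cdot)x=P[f(\cdot)x]$ is holomorphic, since $f(\cdot)x\in H^1(\mathbb T,Y)$. For general $x\in X$, Lemma \ref{sxcvbnbvcv} gives $P_s[f](\cdot)x$ as a locally uniform limit of holomorphic $Y$-valued functions, so it is holomorphic. By the equivalent conditions for holomorphic functions, $P_s[f]\in\hbox{Hol}(\mathbb D,\mathcal B(X,Y))$.

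\emph{Step 2 (upper bound).} For $x\in\hbox{sp}(F)$, Lemma \ref{sxacdfghnm}(a) yields
\[
\|(P_s[f](\cdot)x)_r\|_{L^p(\mathbb T,Y)}\le\|f(\cdot)x\|_{L^p(\mathbb T,Y)}\le M\|x\|\quad\text{for all } r<1 .
\]
Now take $x\in X$ with $\|x\|\le1$ and choose $x_n\in\hbox{sp}(F)$ with $x_n\to x$ and $\|x_n\|\le 1$; rescaling handles the norm constraint. For each fixed $r<1$ the circle $\{|\zeta|=r\}$ is compact, so $(P_s[f](\cdot)x_n)_r\to(P_s[f](\cdot)x)_r$ uniformly on $\mathbb T$, and hence in $L^p(\mathbb T,Y)$. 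Passing to the limit gives $\|(P_s[f](\cdot)x)_r\|_{L^p}\le M$. Taking the supremum over $r$ and then over $x$ shows $P_s[f]\in H^p_s(\mathbb D,\mathcal B(X,Y))$ with $\|P_s[f]\|_{H^p_s}\le M$.

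\emph{Step 3 (reverse inequality).} This is the step I expect to be the crux, and I would reduce it to the known isometry of $P:H^p(\mathbb T,Y)\to H^p(\mathbb D,Y)$ from \cite{BL}, which requires no ARNP. For $x\in\hbox{sp}(F)$ with $\|x\|\le1$, since $f(\cdot)x\in H^p(\mathbb T,Y)$,
\[
\|f(\cdot)x\|_{L^p(\mathbb T,Y)}=\|P[f(\cdot)x]\|_{H^p(\mathbb D,Y)}=\|P_s[f](\cdot)x\|_{H^p(\mathbb D,Y)}\le\|P_s[f]\|_{H^p_s}.
\]
If one wants to avoid citing the isometry, then for $p<\infty$ it follows from Lemma \ref{sxacdfghnm}(b). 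For $p=\infty$ one instead uses Lemma \ref{sxacdfghnm}(c) together with Fatou's lemma, since $\|f(z)x\|=\lim_r\|(P[f(\cdot)x])_r(z)\|$ almost everywhere. Taking the supremum over such $x$ gives $M\le\|P_s[f]\|_{H^p_s}$, and combined with Step 2 this yields equality.

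Injectivity on equivalence classes is then automatic: if $P_s[f]=0$, then $M=0$, so $f(\cdot)x=0$ in $L^p$ for every $x\in\hbox{sp}(F)$, which is exactly the identification made in (i). The only delicate point is being careful that the supremum norms are taken over $\hbox{sp}(F)$ on the circle side and over $X$ on the disk side. Density of $\hbox{sp}(F)$ together with the uniform convergence on circles from Step 2 reconciles the two.
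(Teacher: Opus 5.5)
Your proof is correct and follows essentially the same route as the paper: boundedness and holomorphy of the extended $P_s[f]$ via Lemma \ref{sxcvbnbvcv}, the upper bound from Lemma \ref{sxacdfghnm}(a) on $\hbox{sp}(F)$ plus density, and the reverse inequality from the isometry $P:H^p(\mathbb T,Y)\to H^p(\mathbb D,Y)$ applied to $f(\cdot)x$ for $x\in\hbox{sp}(F)$. The only difference is minor: you pass to the limit $x_n\to x$ using uniform convergence on the circle $|\zeta|=r$, which handles all $1\le p\le\infty$ at once, whereas the paper uses Fatou's lemma for $p<\infty$ and a separate contradiction argument for $p=\infty$; your rescaling to $\|x_n\|\le 1$ also makes explicit a point the paper leaves implicit.
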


\begin{proof} Let $f \in H^p_{s}(\mathbb T, \mathcal L(\hbox{sp}(F),
Y))$ ($1\leq p\leq \infty$) and $\zeta=re^{i \theta} \in \mathbb D$.
Clearly, $P_s[f](\zeta)$ is linear on $X$. \ If
$x \in X$, then there exists a sequence
$(x_n)$ in $\hbox{sp}(F)$ such that $x_n \to x$. \ Thus we have
$$
\begin{aligned}
 ||P_s[f](\zeta)x||&=\lim_{n \to
\infty}\Bigl|\Bigl|\int_{\mathbb T}P_{\zeta}(z)f(z)x_ndm(z)\Bigr|\Bigr|\\
&\leq \frac{1+r}{1-r} \cdot||f||_{L^1_s(\mathbb T,\, \mathcal
L(\hbox{sp}(F), Y))}||x||,
\end{aligned}
$$
which implies that $P_s[f](\zeta) \in \mathcal
B(X, Y)$. \ For each $y^* \in Y^*$, it follows
from Lemma \ref{sxcvbnbvcv} that $\bigl\langle P_s[f](\zeta)x_n, \
y^*\bigr \rangle$ converges uniformly to $\bigl\langle
P_s[f](\zeta)x, \ y^*\bigr \rangle$ on every compact subset of
$\mathbb D$. \ Thus $P_s[f] \in \hbox{Hol}\, (\mathbb D, \mathcal
B(X, Y))$. \ Now we claim that
\begin{equation}\label{dcdvnvbbmmm}
P_s[f] \in  H^p_s(\mathbb D,\, \mathcal B(X,
Y)) \ \hbox{and} \ ||P_s[f]||_{H^p_s(\mathbb D,\, \mathcal
B(X, Y))}\leq ||f||_{L^p_s(\mathbb T,\,
\mathcal L(\hbox{sp}(F), Y))}.
\end{equation}
We split the proof into two cases.

\medskip

Case 1 ($1\le p<\infty$): Let $x$ be an arbitrary unit vector in $X$
and $(x_n)$ be a sequence in $\hbox{sp}(F)$ such that $x_n \to x$. \
Since $P[f(\cdot)x_n] \in H^p(\mathbb D, Y)$, the mapping  $z
\mapsto ||(P_s[f])_r(z)x_n||^p$ is measurable for all $n \in \mathbb
N$. \ Thus it follows from Fatou's lemma and Lemma
\ref{sxacdfghnm}(a) that
$$
\aligned ||(P_s[f])_r(\cdot)x||_{L^p(\mathbb T,
Y)}&=\biggl(\int_{\mathbb
T}\lim_{n \to \infty}||(P_s[f])_r(z)x_n||^pdm(z)\biggr)^{\frac{1}{p}}\\
&\leq \hbox{lim inf}_{n \to \infty}||(P_s[f])_r(\cdot)x_n||_{L^p(\mathbb T, Y)}\\
&\leq \hbox{lim inf}_{n \to \infty}||f(\cdot)x_n||_{L^p(\mathbb T, Y)}\\
&\leq ||f||_{L^p_s(\mathbb T,\, \mathcal L(\hbox{sp}(F), Y))},
\endaligned
$$
which proves (\ref{dcdvnvbbmmm}).

Case 2 ($p=\infty$): Assume to the contrary that
$$
||P_s[f]||_{H^{\infty}_s(\mathbb D,\, \mathcal
B(X, Y))}>||f||_{L^{\infty}_s(\mathbb T,\,
\mathcal L(\hbox{sp}(F), Y))}.
$$
Then there exists a unit vector $x_0$ in $X$
and $\zeta_0 \in \mathbb D$ such that $||P_s[f](\zeta_0)x_0|| \linebreak >
||f||_{L^{\infty}_s(\mathbb T,\, \mathcal L(\hbox{sp}(F), Y))}$.
Choose a sequence $(x_n)$ in $\hbox{sp}(F)$ such that $x_n \to x_0$.
Then for sufficiently large $N$,
$$
||P_s[f](\zeta_0)x_N||> ||f||_{L^{\infty}_s(\mathbb T,\, \mathcal
L(\hbox{sp}(F), Y))}\geq ||f(\cdot)x_N||_{L^{\infty}(\mathbb T, Y)},
$$
which is a contradiction by Lemma \ref{sxacdfghnm}(a). \ This proves
(\ref{dcdvnvbbmmm}) with $p=\infty$.

Now for all $x \in \hbox{sp}(F)$, it follows that
$||f(\cdot)x||_{L^p(\mathbb T, Y)}=||P_s[f](\cdot)x||_{H^p(\mathbb
D, Y)}$ and hence $ ||f||_{L^p_s(\mathbb T,\, \mathcal
L(\hbox{sp}(F), Y))}\leq ||P_s[f]||_{H^p_s(\mathbb D, \mathcal B(X,
Y))}$. \  Therefore, by (\ref{dcdvnvbbmmm}), the
mapping $f \mapsto P_s[f]$ is an isometry from $H^p_s(\mathbb T,
\mathcal L(\hbox{sp}(F), Y))$ to $H^p_s(\mathbb D, \mathcal B(X,
Y))$. \ This completes the proof.
\end{proof}

\begin{cor}\label{sdcrasbghu}
Let $X,Y$ be Banach spaces, and assume that $X$ is separable
and $Y$ has the ARNP. \ For a countable dense subset $F$ of $X$ and
$1\leq p \leq \infty$, the mapping  $f\mapsto P_s[f]$ is an
isometric isomorphism  from $H^p_s(\mathbb T, \mathcal
L(\hbox{sp}(F), Y))$ onto $H^p_s(\mathbb D, \mathcal B(X, Y))$.
\end{cor}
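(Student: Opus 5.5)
Theorem \ref{qqscxwesrftvsryhik} already gives that $f\mapsto P_s[f]$ is an isometry from $H^p_s(\mathbb T,\mathcal L(\hbox{sp}(F),Y))$ into $H^p_s(\mathbb D,\mathcal B(X,Y))$. So only surjectivity remains. Fix $h\in H^p_s(\mathbb D,\mathcal B(X,Y))$; the plan is to build a boundary function $f$ on $\hbox{sp}(F)$, check that $f\in H^p_s(\mathbb T,\mathcal L(\hbox{sp}(F),Y))$, and show $P_s[f]=h$ on all of $X$.

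\textbf{Construction.} Since $h(\cdot)x\in H^p(\mathbb D,Y)\subseteq H^1(\mathbb D,Y)$ for each $x$ and $Y$ has the ARNP, the Bukhvalov--Danilevich theorem recalled in the introduction gives, for each $x$:
\begin{itemize}
\item[(i)] a boundary function $b(h(\cdot)x)\in H^p(\mathbb T,Y)$;
\item[(ii)] $P[b(h(\cdot)x)]=h(\cdot)x$;
\item[(iii)] $\|b(h(\cdot)x)\|_{L^p(\mathbb T,Y)}=\|h(\cdot)x\|_{H^p(\mathbb D,Y)}$.
\end{itemize}
The difficulty in the main theorem was that the exceptional null sets depend on $x$. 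Here only countably many of them appear, which is why $F$ is taken countable.

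Let $\mathbb Q_c$ be the countable field of complex rationals and $V:=\hbox{sp}_{\mathbb Q_c}(F)$, a countable set. Take $E:=\bigcup_{v\in V}E_v$, where $E_v$ is the null set off which $\lim_{r\to1}h_r(z)v$ exists; then $m(E)=0$. For $z\notin E$ and $v\in V$, set $f(z)v:=\lim_{r\to1}h(rz)v$; this is $\mathbb Q_c$-linear in $v$. To get full $\mathbb C$-linearity on $\hbox{sp}(F)$, choose a maximal linearly independent subset $\{e_j\}$ of $F$. It is countable and spans $\hbox{sp}(F)$ over $\mathbb C$. For $z\notin E$ define $f(z)\bigl(\sum c_je_j\bigr):=\sum c_j\,f(z)e_j$ (finite sums). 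This gives $f(z)\in\mathcal L(\hbox{sp}(F),Y)$ for $z\notin E$.

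\textbf{Verification.} Fix $x=\sum c_je_j\in\hbox{sp}(F)$. By linearity of radial limits, $f(\cdot)x=\lim_{r\to1}h_r(\cdot)x=b(h(\cdot)x)$ a.e. Hence $f(\cdot)x\in H^p(\mathbb T,Y)$ and $P_s[f](\cdot)x=P[f(\cdot)x]=h(\cdot)x$. Moreover
$$
\|f(\cdot)x\|_{L^p(\mathbb T,Y)}=\|h(\cdot)x\|_{H^p(\mathbb D,Y)}\le\|h\|_{H^p_s(\mathbb D,\mathcal B(X,Y))}\,\|x\|,
$$
so $f\in H^p_s(\mathbb T,\mathcal L(\hbox{sp}(F),Y))$. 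Equivalence classes are taken modulo a.e. equality of each $f(\cdot)x$, so the choice of $E$ is harmless.

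\textbf{Extension to $X$.} Finally, $P_s[f]$ and $h$ agree on $\hbox{sp}(F)$, which is dense in $X$. For each $\zeta\in\mathbb D$, the extension (\ref{3201}) is a limit along $x_n\to x$, and $h(\zeta)$ is bounded, so $P_s[f](\zeta)x=\lim_n h(\zeta)x_n=h(\zeta)x$. Thus $P_s[f]=h$, proving surjectivity; the isometry property comes from Theorem \ref{qqscxwesrftvsryhik}.

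The only delicate step is the passage from the countable set $V$ to genuine $\mathbb C$-linearity of $f(z)$ on $\hbox{sp}(F)$ for a.e. $z$. The Hamel-basis-from-$F$ device handles this, since it requires only countably many null sets.
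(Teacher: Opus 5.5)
Your proposal is correct and takes the same route as the paper. The isometry comes from Theorem \ref{qqscxwesrftvsryhik}, and surjectivity comes from the radial-limit boundary construction on a countable set followed by the density extension, exactly as in the argument for (\ref{314003}) that the paper's one-line proof cites. The $\mathbb Q_c$-span and Hamel-basis device are harmless but unnecessary: off $\bigcup_{x\in F}E_x$ the radial limit $\lim_{r\to1}h_r(z)x$ already exists for every $x\in\hbox{sp}(F)$, since $x$ is a finite combination of elements of $F$, and this limit is automatically $\mathbb C$-linear in $x$ because each $h_r(z)$ is.
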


\begin{proof}
A similar argument to (\ref{314003}) shows that the mapping
$f\mapsto P_s[f]$ is a surjection from $H^p_s(\mathbb T, \mathcal
L(\hbox{sp}(F), Y))$ into $H^p_s(\mathbb D, \mathcal B(X, Y))$.
Thus the result follows at once from Theorem \ref{qqscxwesrftvsryhik}.
\end{proof}

\medskip

For $1\leq p \leq \infty$, let $L^p_{s}(\mathbb T,
\mathcal B(X, Y))$ be the space of all (equivalence classes of) SOT
measurable functions $f: \mathbb T \rightarrow \mathcal B(X, Y)$
such that $f(\cdot)x\in L^p(\mathbb T, Y)$ for all $x \in X$; we
identify $f$ and $g$ when $f(\cdot)x=g(\cdot)x$ in $L^p(\mathbb T,
Y)$ for all $x \in X$. \ If $f \in L^p_{s}(\mathbb T, \mathcal B(X,
Y))$, then it follows from the Closed Graph Theorem that
$$
||f||_{L^p_{s}(\mathbb T, \mathcal B(X, Y))}:=\sup
\bigl\{||f(\cdot)x||_{L^p(\mathbb T, Y)}: x \in X \ \hbox{with} \
||x||\leq 1 \bigr\}<\infty.
$$
Then  $L^p_{s}(\mathbb T, \mathcal B(X, Y))$ is a normed space (cf.
\cite{CHL2}, \cite{TJML}, \cite{Pe}). \ In general, the space
$L^p_s(\mathbb T, \mathcal B(X,Y))$ is not complete (cf.
\cite[p.64]{TJML}). \ Also we define $H^p_{s}(\mathbb T, \mathcal
B(X, Y))$ by the space of all (equivalence classes of) functions $f
\in L^p_{s}(\mathbb T, \mathcal B(X, Y))$ such that $f(\cdot)x \in
H^p(\mathbb T, Y)$ for every $x \in X$.

\begin{ex}\label{lsss}
In view of Lemma \ref{sdsacdny}, we may ask whether or not
$L^p_{s}(\mathbb T, \mathcal B(X, Y))= L^p_{sot}(\mathbb T, \mathcal
B(X, Y))$ if $X$ and $Y$ are separable Banach spaces. \ The answer,
however, is negative. \ To see this,  we use the notation
$$
({\bf 1}_F \otimes x)(z):={\bf 1}_F(z) x \quad \hbox{for} \ x \in X,
\ F\subseteq \mathbb T.
$$
Write $H^2\equiv H^2(\mathbb T)$ and define a function $f:\mathbb
T\to\mathcal B(H^2)$ by
\begin{equation}\label{axsdcfvbghn}
(f(z)x)(s):=\widehat{x}(0)+\sum_{n=1}^{\infty}\bigl({\mathbf
1}_{F_n}\otimes \sqrt{2n} \, \widehat{x}(n) \bigr)(z)s^n \quad (x
\in H^2),
\end{equation}
where $F_n:=\{e^{i \theta} : (2-\frac{1}{n})\pi \leq \theta <2\pi
\}$ for $n=1,2,\cdots$. \ Let
$$
E_0:= \{e^{i \theta}: 0 < \theta <\pi \} \quad  \hbox{and} \quad
E_n:=F_n \setminus F_{n+1} \ \  \hbox{for} \ n =1,2,\cdots.
$$
Then for each $1\neq z \in \mathbb T$, there exists
$N\geq 0$ such that $z \in E_N$. \ Thus, by
(\ref{axsdcfvbghn}), we have that for $N\ge 1$,
\begin{equation}\label{kbklklkl}
(f(z)x)(s)=\widehat{x}(0)+\sum_{n=1}^{N}\sqrt{2n}\,\widehat{x}(n)
s^n \quad (z \in E_N),
\end{equation}
which implies $f(z)\in \mathcal B(H^2)$. \ For $x \in H^2$, it
follows from (\ref{kbklklkl}) that
$$
\aligned ||f(\cdot )x||_{L^2(\mathbb T,\,H^2)}^2&=\sum_{N=0}^{\infty}\int_{E_N}||f(z)x||^2dm(z)\\
&=\frac{1}{2}\bigl|\widehat{x}(0)\bigr|^2+
\sum_{N=1}^{\infty}\biggl(\bigl|\widehat{x}(0)\bigr|^2+
\sum_{n=1}^{N}2n\bigl|
\widehat{x}(n)\bigr|^2 \biggr)m(E_N)\\
&=\bigl|\widehat{x}(0)\bigr|^2+\sum_{n=1}^{\infty}2n\bigl|\widehat{x}(n)\bigr|^2m(F_n)\\
&=||x||_{H^2}^2,
\endaligned
$$
which proves that $f\in L^2_{s}(\mathbb T, \mathcal
B(H^2))$.\ However, we have $f\notin
L^2_{sot}(\mathbb T, \mathcal B(H^2))$: indeed if $z \in E_n \
(n\geq 1)$ then it follows from (\ref{kbklklkl}) that
$||f(z)s^n||_{H^2}=\sqrt{2n}$. \ Thus
$$
\int_{\mathbb T}||f(z)||^2dm(z)=\sum_{n=0}^{\infty}
\int_{E_n}||f(z)||^2dm(z) \geq \frac{1}{2}+
\sum_{n=1}^{\infty}\frac{1}{n+1}=\infty,
$$
which implies that $f \notin L^2_{sot}(\mathbb T, \mathcal
B(H^2))$.\qed
\end{ex}

The following diagram summarizes the preceding arguments.

\medskip

Let $X$ be a separable Banach space, $Y$ be a Banach space satisfying
the
ARNP and $F$ be a countable dense subset of $X$. \ Then for all $1\leq
p\leq \infty$,

\begin{alignat}{3} \label{diagram}
H^p(\mathbb T, \mathcal B(X,Y))\subsetneqq\,  & H^p_{sot}(\mathbb T,
\mathcal B(X,Y))\subsetneqq H^p_{s}(\mathbb T, \mathcal B(X,Y))&\,
\subsetneqq\, &H^p_{s}(\mathbb T,\mathcal L(\hbox{sp}(F),Y)) \nonumber \\
& \qquad\rotatebox{90}{$\cong$} \; \bigg\downarrow \; P_s & \ &\qquad\rotatebox{90}{$\cong$} \; \bigg\downarrow \; P_s &\\
&H^p(\mathbb D, \mathcal B(X,Y))   &\   & H^p_{s}(\mathbb D,\mathcal B(X,Y)). \nonumber
\end{alignat}

\medskip
We note that all the inclusions on the first line of (\ref{diagram})
are strict. \ Indeed, in Example \ref{hsot}, we saw
$H^p(\mathbb T,\mathcal B(X,Y))\ne H^p_{sot}(\mathbb T, \mathcal
B(X,Y))$ in general. \ We give some examples such that the other two
inclusions are strict.

\medskip

\begin{ex}\label{hasszdSAXt}
(a) In general, $H^p_{sot}(\mathbb T,\mathcal B(X,Y))\ne
H^p_{s}(\mathbb T, \mathcal B(X,Y))$. \ Indeed, for a separable
Hilbert space $E$, there exists a function $g\in H^2_{s}(\mathbb T,
\mathcal B(E))$ such that $g\notin H^2_{sot}(\mathbb T, \mathcal
B(E))$. \ To see this, we introduce some notations. \ For $f\in
L^2_s(\mathbb T, \mathcal B(E))$, we denote by $f_-$
and $f_+$ the functions
$$
\aligned &f_-(z)x:=\bigl(P_-(f(\cdot)x)\bigr)(\overline{z}) \quad
(z \in \mathbb T,  \ x \in E);\\
&f_+(z) x:=\bigl(P_+(f(\cdot)x)\bigr)(z) \quad  (z \in \mathbb T, \
x \in E),
\endaligned
$$
where $P_+$ and $P_-$ are the orthogonal projections from
$L^2(\mathbb T, E)$ onto $H^2(\mathbb T, E)$ and $L^2(\mathbb T, E)
\ominus H^2(\mathbb T, E)$, respectively (cf.
\cite{Pe}). \ Then, $f_-, f_+\in H^2_s(\mathbb T,
\mathcal B(E))$ and we may write
$$
f(z)= f_+(z)+f_-(\overline{z}) \quad(z \in \mathbb T).
$$
Let $f$ be the function given in (\ref{axsdcfvbghn}). \ Assume that
$f_+ \in H^2_{sot}(\mathbb T, \mathcal B(H^2))$ and $f_- \in
H^2_{sot}(\mathbb T, \mathcal B(H^2))$. \ Observe that for $z \in
\mathbb T$,
$$
||f(z)||^2 \leq
||f_-(\overline{z})||^2+||f_+(z)||^2+2||f_-(\overline{z})||||f_+(z)||.
$$
It thus follows from H\" older's inequality that
$$
\aligned \int_{\mathbb T} ||f(z)||^2dm(z)&\leq \int_{\mathbb
T}||f_-(z)||^2dm(z)+\int_{\mathbb T}||f_+(z)||^2dm(z)\\
&+2\biggl(\int_{\mathbb
T}||f_-(z)||^2dm(z)\biggr)^{\frac{1}{2}}\biggl(\int_{\mathbb
T}||f_+(z)||^2dm(z)\biggr)^{\frac{1}{2}}\\
&<\infty,
\endaligned
$$
which is a contradiction. \ We thus have $f_+ \notin
H^2_{sot}(\mathbb T, \mathcal B(H^2))$ or $f_-\notin
H^2_{sot}(\mathbb T, \mathcal B(H^2))$. \ Note that $H^2$ is a
separable Hilbert space.

\medskip

(b) In general, $H^p_s(\mathbb T, \mathcal B(X,Y))\ne H^p_s(\mathbb
T, \mathcal L(\hbox{sp}(F), Y))$. \ To see this, define $f:\mathbb
T\to \mathcal L(\ell^2,\mathbb C)$ by
$$
f(z)x:=\sum_{n=1}^{\infty}x(n)z^{n} \quad\quad (x \equiv (x(n)) \in \ell^2).
$$
Then $f(z)$ is not bounded for all $z\in\mathbb T$ because for any
$z_0\in\mathbb T$, if we let
$$
x_0(n):=\frac{\overline z_0^n}{n}  \quad (n=1,2,\cdots),
$$
then $f(z_0)x_0=\sum_{n=1}^\infty \frac{1}{n}=\infty$. \ Thus,
$f\notin H^2_s(\mathbb T, \mathcal B(\ell^2,\mathbb C))$. \ On the
other hand, let
$$
F:=\Bigl\{\sum_{n\in\Omega}\alpha_n e_n: \alpha_n \in \mathbb Q \
 \hbox{and $\Omega$ is a finite subset of $\mathbb N$}
\Bigr\},
$$
where $\mathbb Q$ is a countable dense subset of $\mathbb C$ and
$\{e_n: n=1,2,\cdots\}$ is the canonical orthonormal basis for
$\ell^2$. \ Then $F$ is a countable dense subset of $\ell^2$ and we
can easily see that $f \in H^2_{s}(\mathbb T, \mathcal
L(\hbox{sp}(F), \mathbb C))$.
\end{ex}


\vskip 1cm

\noindent {\bf Declarations of interest}: none.

\vskip 1cm

\noindent {\bf Author contributions}: All authors have contributed equally to all relevant areas of the manuscript.

\vskip 1cm

\noindent \textit{Acknowledgments}. \ The authors are deeply indebted to the referee of a previous version for many suggestions on the structure, substance, and style of the paper, which have helped improved the presentation; those suggestions have been incorporated into this version. \ The work of the second named author was
supported by NRF(Korea) grant No. 2022R1A2C1010830. \ The work of the
third named author was supported by NRF(Korea) grant No.
2020R1I1A1A01053085. \ The work of the fourth named author was
supported by NRF(Korea) grant No. 2021R1A2C1005428.


%
%


\vskip 1cm

Ra{\'u}l\ E.\ Curto

Department of Mathematics, University of Iowa, Iowa City, IA 52242,
U.S.A.

E-mail: raul-curto@uiowa.edu

\bigskip

In Sung Hwang

Department of Mathematics, Sungkyunkwan University, Suwon 16419,
Korea

E-mail: ihwang@skku.edu

\bigskip

Sumin Kim

Department of Mathematics, Sungkyunkwan University, Suwon 16419,
Korea

E-mail: suminkim1023@gmail.com

\bigskip

Woo Young Lee

Department of Mathematics and RIM, Seoul National University, Seoul
08826, Korea

E-mail: wylee@snu.ac.kr

\end{document}